\documentclass[pdflatex,sn-mathphys-num]{sn-jnl}

\newcommand{\hk}{\mathbf{HK}_{\Theta}}
\newcommand{\hkg}{\mathbf{HK}_{\Gamma}}
\newcommand{\vp}{\varphi}
\newcommand{\tot}{\overset{\Theta}{\to}}
\newcommand{\tog}{\overset{\Gamma}{\to}}
\newcommand{\sgt}{\widetilde{\mathcal{S}}_{\Gamma,\Theta}}
\newcommand{\stt}{\widetilde{\mathcal{S}}_{\Theta,\Theta}}

\usepackage[T1]{fontenc}
\usepackage[utf8]{inputenc}
\usepackage{bbm}
\usepackage{amsmath, amsthm, amssymb, amsfonts}
\usepackage{hyperref}
\usepackage{enumitem}
\usepackage{mathrsfs}
\usepackage{tikz}
\usepackage{graphicx}

\newtheorem{theorem}{Theorem}[section]
\newtheorem{lemma}[theorem]{Lemma}
\newtheorem{proposition}[theorem]{Proposition}
\newtheorem{corollary}[theorem]{Corollary}
\newtheorem*{claim}{Claim}
\theoremstyle{definition}
\newtheorem{definition}[theorem]{Definition}

\theoremstyle{remark}
\newtheorem{remark}[theorem]{Remark}

\setlist[itemize]{
  topsep=6pt,
  partopsep=0pt,
  itemsep=2pt,
  parsep=0pt,
  labelindent=\parindent,
  leftmargin=*,
  align=left
}

\setlist[enumerate,1]{
  topsep=6pt,
  partopsep=0pt,
  itemsep=2pt,
  parsep=0pt,
  labelindent=\parindent,
  leftmargin=*,
  align=right
}

\begin{document}

\title[Homomorphisms of Hecke--Kiselman Monoids Associated to Simple Oriented Graphs]{Homomorphisms of Hecke--Kiselman Monoids Associated to Simple Oriented Graphs}


\author*[1]{\fnm{Luka} \sur{Andren\v sek}}\email{andrensek.luka@gmail.com}

\affil*[1]{\orgdiv{Department of Mathematics}, \orgname{Faculty of Mathematics and Physics, University of Ljubljana}, \orgaddress{\street{Jadranska ulica 19}, \city{Ljubljana}, \postcode{SI-1000},  \country{Slovenia}}}

\abstract{
   \normalfont\unboldmath
   Let $\Gamma$ and $\Theta$ be finite simple oriented graphs, and let $\mathbf{HK}_{\Gamma}$ and $\mathbf{HK}_{\Theta}$ denote the corresponding Hecke--Kiselman monoids.
   We present a set of Boolean matrices that is in bijection with the set of monoid homomorphisms from $\mathbf{HK}_{\Gamma}$ to $\mathbf{HK}_{\Theta}$.
   In particular, we obtain a Boolean matrix monoid isomorphic to the endomorphism monoid of $\mathbf{HK}_{\Theta}$.
}

\keywords{Hecke--Kiselman monoid, homomorphism, endomorphism monoid, idempotent, Boolean matrix monoid}

\maketitle

\section{Introduction}

Let $\Theta$ be a finite simple oriented graph, that is, a finite oriented graph with no multiple arrows, no loops, and no oriented cycles of length $2$.
Let $V_{\Theta} = \{x_1, x_2, \dots, x_n\}$ be the vertex set of $\Theta$ for some positive integer $n$.
If there is an arrow from $x_i$ to $x_j$ in $\Theta$, we write $x_i \tot x_j$.
For distinct vertices $x_i$ and $x_j$, we say that $x_i$ and $x_j$ are not connected in $\Theta$ if there is no arrow from $x_i$ to $x_j$ and there is no arrow from $x_j$ to $x_i$ in $\Theta$.

In~\cite{ganyushkin11}, Ganyushkin and Mazorchuk define the \emph{Hecke--Kiselman monoid} $\hk$ associated to $\Theta$ as the monoid generated by
\[
x_1, x_2, \dots, x_n
\]
subject to the following \emph{defining relations} for all $x_i, x_j \in V_{\Theta}$:
\begin{itemize}
    \item $x_i^2 = x_i$;
    \item $x_i x_j = x_j x_i$ if $x_i$ and $x_j$ are not connected in $\Theta$;
    \item $x_i x_j x_i = x_j x_i x_j = x_i x_j$ if $x_i \tot x_j$.
\end{itemize}
We denote by $e$ the unit element of $\hk$.
Let $\Gamma$ be another finite simple oriented graph with vertex set $V_{\Gamma} = \{y_1, y_2, \dots, y_m\}$ for some positive integer $m$,
and let $\hkg$ denote the corresponding Hecke--Kiselman monoid.

This paper studies the set of all monoid homomorphisms from $\hkg$ to $\hk$, which we denote by $\mathrm{Hom}(\hkg, \hk)$, and the endomorphism monoid $\mathrm{End}(\hk)$ whose operation is the usual composition of maps.
Throughout, monoid homomorphisms are assumed to preserve the identity element.

Hecke--Kiselman monoids were originally introduced in the setting of finite simple digraphs~\cite{ganyushkin11}.
They are a natural mixture of two families of monoids, namely, 0-Hecke monoids, which are closely connected to Coxeter groups, and Kiselman monoids~\cite{kiselman,kudryavtseva}.
They have been studied by various authors~\cite{aragona13, aragona20, Ashikhmin15, collina, forsberg2017, ganyushkin11, Lebed2025, wiertel24}.
Particular interest has been devoted to the corresponding Hecke--Kiselman algebras~\cite{okninski20,okninski21,wiertel22,wiertel23}.
In this paper, we only consider Hecke--Kiselman monoids associated to simple oriented graphs.

From the proof of~\cite[Theorem~16]{ganyushkin11}, it follows that monoid isomorphisms from $\hkg$ to $\hk$ are in bijection with graph isomorphisms from $\Gamma$ to $\Theta$.
Setting $\Gamma = \Theta$, it can be verified that the automorphism group of $\hk$ is isomorphic to the automorphism group of the graph $\Theta$.
To the best of our knowledge, general monoid homomorphisms from $\hkg$ to $\hk$ and the structure of the endomorphism monoid of $\hk$ have not been systematically studied in the literature.
However, a description of endomorphisms of the Kiselman monoid, which is the Hecke--Kiselman monoid corresponding to the complete acyclic oriented graph, is known in the literature~\cite{andrensek}.

The main result of this paper is Theorem~\ref{hom:main}.
We show that $\mathrm{Hom}(\hkg, \hk)$ is in bijection with the set of Boolean matrices corresponding to $(\Gamma,\Theta)$\emph{-admissible} sequences of sets,
which are defined in Definition~\ref{definition:p-pure}.
These are finite sequences of length $m$ of subsets of $V_{\Theta}$ for which the induced full subgraphs of $\Theta$ do not contain any oriented cycles,
and which satisfy a simple graph-theoretic condition, later denoted by $p_{\Theta}$ and defined in Definition~\ref{definition:p}, according to arrows in $\Gamma$.
Furthermore, in Theorem~\ref{endo:main}, we show that in the case $\Gamma = \Theta$, the constructed bijection between $\mathrm{End}(\hk)$ and the corresponding Boolean matrices is a monoid isomorphism.

The paper is organised as follows.
In Section~\ref{PRELIM}, we establish basic notation and recall results from the relevant literature.
In Section~\ref{PROP}, we prove new results on idempotents in $\hk$.
These results are used to prove the main result in Section~\ref{HOM},
and we consider the endomorphism monoid of $\hk$ in Section~\ref{ENDO}.

\section{Preliminaries}\label{PRELIM}

Denote by $F(V_{\Theta})$ the free monoid over the alphabet $V_{\Theta} = \{x_1, x_2, \dots, x_n\}$.
Elements of $F(V_{\Theta})$ are called \emph{words}, and each $x_i \in V_{\Theta}$ is called a \emph{letter}.
In what follows, we will use the term \emph{letter} to refer to the vertices of $\Theta$, generators of $\hk$, and generators of $F(V_{\Theta})$.
We have the \emph{canonical projection}
\[
\pi : F(V_{\Theta}) \twoheadrightarrow \hk.
\]
For a word $w \in F(V_{\Theta})$, denote by $\ell(w)$ its length.
The unique word of length $0$ is called the \emph{empty word}.

\subsection{The Content Map}

For a word $w \in F(V_{\Theta})$, we define its content $c(w)$ as the set of $x_i \in V_{\Theta}$ for which $x_i$ appears in $w$.
Since the defining relations of $\hk$ preserve content, we can define the content and denote it again by $c(x)$ for $x \in \hk$ as the content of any word $w \in F(V_{\Theta})$ with $x = \pi(w)$.
Moreover, we have 
\[
c(xy) = c(x) \cup c(y) \quad \text{and} \quad c(e) = \emptyset,
\]
which shows that $c : \hk \twoheadrightarrow \mathbbm{2}^{V_{\Theta}}$ is a monoid epimorphism, where $\mathbbm{2}^{V_{\Theta}}$ is equipped with the operation $\cup$.

We use the following lemma in the proof of Proposition~\ref{phi:homo}.
\begin{lemma}\label{content:lemma}
   Let $\psi, \vp \in \mathrm{End}(\hk)$ and $x_i \in \hk$.
   Then 
   \[
   c((\psi \circ \vp)(x_i)) = \bigcup_{x_j \in c(\vp(x_i))} c(\psi(x_j)).
   \]
\end{lemma}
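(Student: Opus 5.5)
The idea is to write $\vp(x_i)$ as the image of a word, push $\psi$ through that word using that $\psi$ is a homomorphism, and then apply the fact that $c$ is a monoid epimorphism onto $(\mathbbm{2}^{V_{\Theta}},\cup)$.

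First, I choose a word $w = x_{j_1} x_{j_2} \cdots x_{j_k} \in F(V_{\Theta})$ with $\pi(w) = \vp(x_i)$. The case $k = 0$ is allowed, namely when $\vp(x_i) = e$. By the definition of content on $\hk$,
\[
c(\vp(x_i)) = c(w) = \{x_{j_1}, \dots, x_{j_k}\}.
\]
Since $\psi$ is a monoid homomorphism, I get
\[
(\psi\circ\vp)(x_i) = \psi(x_{j_1} \cdots x_{j_k}) = \psi(x_{j_1})\psi(x_{j_2}) \cdots \psi(x_{j_k}).
\]
When $k = 0$ this product equals $\psi(e) = e$, because homomorphisms are assumed to preserve the identity.

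Next, I apply $c$. The paper has already noted that $c(xy) = c(x)\cup c(y)$ and $c(e) = \emptyset$. A trivial induction on $k$ extends the first identity to finite products, giving
\[
c((\psi\circ\vp)(x_i)) = \bigcup_{t=1}^{k} c(\psi(x_{j_t})).
\]
The index set $\{j_1,\dots,j_k\}$ may contain repetitions. These do not affect a union, so the right-hand side equals $\bigcup_{x_j \in c(\vp(x_i))} c(\psi(x_j))$.

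In the case $k=0$, both sides are empty: the left side is $c(\psi(e)) = c(e) = \emptyset$, and the right side is an empty union. This empty-word case is the only point needing care, and it is covered by identity preservation. No real obstacle is expected, since the argument uses only the homomorphism properties of $\psi$ and $c$ and the well-definedness of content.
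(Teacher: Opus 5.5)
Your proposal is correct and matches the paper's proof. In both, you write $\vp(x_i)$ as a product of letters, push $\psi$ through using the homomorphism property, and take the union of contents. The only difference is that you handle the case $\vp(x_i) = e$ uniformly as the empty word with $k=0$, where the paper treats it as a separate case.
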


\begin{proof}
   If $\vp(x_i) = e$, then we have $c(\vp(x_i)) = \emptyset$ and $(\psi \circ \vp)(x_i) = e$.
   Hence $c((\psi \circ \vp)(x_i)) = \emptyset$, and thus the conclusion holds in this case.
   
   Now suppose $\vp(x_i) \neq e$ and write $\vp(x_i) = x_{i_1} \dots x_{i_k}$ for some letters $x_{i_1}, \dots, x_{i_k} \in \hk$ and $k \geq 1$.
   Then $c(\vp(x_i)) = \{x_{i_1}, \dots, x_{i_k}\}$ and we further have
   \[
   (\psi \circ \vp)(x_i) = \psi(\vp(x_i)) = \psi(x_{i_1} \dots x_{i_k}) = \psi(x_{i_1}) \dots \psi(x_{i_k}).
   \]
   Hence we have 
   \[
   c((\psi \circ \vp)(x_i)) = c(\psi(x_{i_1})) \cup \dots \cup c(\psi(x_{i_k})),
   \]
   and therefore, 
   \[
   c((\psi \circ \vp)(x_i)) = \bigcup_{x_j \in \{x_{i_1}, \dots, x_{i_k}\}} c(\psi(x_j)) = \bigcup_{x_j \in c(\vp(x_i))} c(\psi(x_j)).
   \]
\end{proof}

\subsection{Normal Forms in $\hk$}\label{NF}

Here we recall the main results from~\cite{aragona20}.
The following lemma is proved in~\cite[Lemma~1]{aragona20}.
\begin{lemma}\label{NF:lemma1}
   If $x_i \in \hk$ is a letter and $w \in \hk$ is obtained by multiplying letters that do not admit arrows to (respectively from) $x_i$,
   then $x_i w x_i = x_i w$ (respectively $x_i w x_i = w x_i$).
\end{lemma}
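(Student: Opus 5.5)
We argue by induction on the length of a word representing $w$, treating the case of letters that do not admit arrows \emph{to} $x_i$; the other case is symmetric. It follows from the first by reading all words backwards, i.e.\ applying the anti-isomorphism $\hk \to \mathbf{HK}_{\Theta^{\mathrm{op}}}$, where $\Theta^{\mathrm{op}}$ is $\Theta$ with all arrows reversed.

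Write $w = x_{j_1} x_{j_2} \cdots x_{j_k}$, where no $x_{j_t}$ satisfies $x_{j_t} \tot x_i$. If $k=0$, the claim is $x_i x_i = x_i$, which is a defining relation. For $k \geq 1$, put $w' = x_{j_2}\cdots x_{j_k}$ and $y = x_{j_1}$, so that $w = y w'$ and $w'$ again consists of letters not admitting arrows to $x_i$. There are three cases for $y$.
\begin{itemize}
   \item If $y = x_i$, then $x_i w x_i = x_i w' x_i$ because $x_i^2 = x_i$. By induction this equals $x_i w' = x_i x_i w' = x_i w$.
   \item If $y$ and $x_i$ are not connected, then $y$ commutes with $x_i$. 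Hence $x_i w x_i = y\, x_i w' x_i = y\, x_i w' = x_i y w' = x_i w$, using induction for the middle equality.
   \item If $x_i \tot y$, this is the key case. The defining relation gives $x_i y = x_i y x_i$, so
   \[
   x_i w x_i = x_i y w' x_i = x_i y x_i w' x_i = x_i y x_i w' = x_i y w' = x_i w,
   \]
   where the third equality is the induction hypothesis applied to $x_i w' x_i$, and the fourth uses $x_i y x_i = x_i y$ once more.
\end{itemize}
The remaining possibility, $y \tot x_i$, is excluded by hypothesis.

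The main point to check is that the induction is set up on the right quantity. The hypothesis must be applied to the shorter word $w'$, in the form $x_i w' x_i = x_i w'$, with $w'$ sitting inside a longer product. This is legitimate because $\hk$ is a monoid, so equalities of elements are preserved under left and right multiplication.

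The only other thing to confirm is the relation used in the third case. We need $x_i y x_i = x_i y$ whenever $x_i \tot y$, and this is exactly one of the defining relations. No cycles or other graph conditions are needed, so the proof is a clean induction.
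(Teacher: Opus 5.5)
Your proof is correct. The case split on the first letter $y$ of $w$ is exhaustive: $y = x_i$, $y$ not connected to $x_i$, or $x_i \tot y$, since $y \tot x_i$ is excluded by hypothesis. Each chain of equalities is valid. In the key case, the relation $x_i y = x_i y x_i$ is exactly what places a copy of $x_i$ directly before $w'$, so that the induction hypothesis can be applied to $x_i w' x_i$.

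The dual statement does follow by word reversal. Reversing words turns the relations $x_i x_j x_i = x_j x_i x_j = x_i x_j$ for $x_i \tot x_j$ into the relations for the reversed arrow, so reversal induces an anti-isomorphism $\hk \to \mathbf{HK}_{\Theta^{\mathrm{op}}}$.

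The paper gives no argument for this lemma; it simply imports it from \cite[Lemma~1]{aragona20}. Your induction on the length of $w$ is therefore a self-contained verification of the cited result rather than an alternative to a proof in the paper.
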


The \emph{right cancellation} $\overset{r}{\to}$ and \emph{left cancellation} $\overset{l}{\to}$ are binary relations on $F(V_{\Theta})$ defined as follows.
For $w, w' \in F(V_{\Theta})$, we have
\begin{itemize}
   \item $w \overset{r}{\to} w'$ if there exist words $w_1, w_2, u \in F(V_{\Theta})$, and a letter $x_i \in F(V_{\Theta})$ such that 
   $w = w_1 x_i u x_i w_2$, $w' = w_1 x_i u w_2$, and no letter in $u$ has an arrow to $x_i$.
   \item $w \overset{l}{\to} w'$ if there exist words $w_1, w_2, u \in F(V_{\Theta})$, and a letter $x_i \in F(V_{\Theta})$ such that 
   $w = w_1 x_i u x_i w_2$, $w' = w_1 u x_i w_2$, and no letter in $u$ has an arrow from $x_i$.
\end{itemize}
Right and left cancellations are also called \emph{elementary cancellations}.
Denote by $\to = \overset{r}{\to} \cup \overset{l}{\to}$ and let $\overset{*}{\to}$ be the reflexive-transitive closure of $\to$ on $F(V_{\Theta})$.
Then Lemma~\ref{NF:lemma1} implies that for $u, v \in F(V_{\Theta})$, if $u \to v$, then $\pi(u) = \pi(v)$.
Therefore, if $w \overset{*}{\to} w'$ for $w, w' \in F(V_{\Theta})$, then $\pi(w) = \pi(w')$.

\begin{definition}
   A word $w \in F(V_{\Theta})$ is a \emph{normal form} of $x \in \hk$ if $x = \pi(w)$ and no elementary cancellations may be performed on $w$.
\end{definition}
Every $x \in \hk$ has at least one normal form, as any word of minimal length in $\pi^{-1}(\{x\})$ must be a normal form of $x$.
Denote by $\mathcal{N}$ the set of all normal forms of all elements of $\hk$.

Define the equivalence relation $\sim$ on $F(V_{\Theta})$, which is generated by \emph{elementary commutations}:
\[
w_1 x_i x_j w_2 \sim w_1 x_j x_i w_2,
\]
for all words $w_1, w_2 \in F(V_{\Theta})$, and letters $x_i, x_j \in F(V_{\Theta})$ such that $x_i$ and $x_j$ are not connected in $\Theta$.
The following was proved in~\cite[Theorem 1]{aragona20}.
\begin{theorem}\label{normal:form}
   The following statements hold.
   \begin{enumerate}[label=(\roman*), ref=\roman*]
      \item\label{normal:form:i} Let $w, w' \in F(V_{\Theta})$. If $\pi(w) = \pi(w')$ and $u, v \in \mathcal{N}$ satisfy
         $w \overset{*}{\to} u$ and $w' \overset{*}{\to} v$, then $u \sim v$.
      \item\label{normal:form:ii} Every simplifying sequence 
      \[
      w \to w_1 \to w_2 \to \dots \to w_k
      \]
      may be extended to a simplifying sequence ending on a normal form of $\pi(w)$.
   \end{enumerate}
\end{theorem}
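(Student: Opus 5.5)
The plan is to handle part~(\ref{normal:form:ii}) directly and to obtain part~(\ref{normal:form:i}) from a confluence-modulo-commutation argument for the rewriting system $\to$.

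\textbf{Part~(\ref{normal:form:ii}).} Every elementary cancellation deletes exactly one letter, so $\ell(w_{t+1}) = \ell(w_t) - 1$. Starting from $w_k$, I keep applying elementary cancellations as long as one is available. This stops after at most $\ell(w_k)$ further steps, at a word $u$ on which no cancellation applies. Since $\to$ preserves $\pi$ by Lemma~\ref{NF:lemma1}, we get $\pi(u) = \pi(w)$. Hence $u$ is a normal form of $\pi(w)$, and the extended sequence ends at $u$.

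\textbf{Part~(\ref{normal:form:i}), setup.} I would treat $(F(V_\Theta), \to)$ as a terminating rewriting system modulo the equivalence $\sim$. Termination holds because length strictly decreases. The two key lemmas are the following.
\begin{enumerate}[label=(\alph*)]
\item \emph{Coherence.} If $w \sim w'$ and $w \to u$, then $w' \overset{*}{\to} u'$ for some $u' \sim u$. It suffices to check this for a single elementary commutation. Such a commutation swaps two unconnected letters. It either lies outside the cancelled pattern $x_i u x_i$, or inside $u$, or it moves one of the two copies of $x_i$ past an unconnected letter. In the last case that letter enters or leaves $u$, and the condition ``no arrow to (from) $x_i$'' is preserved.
\item \emph{Local confluence modulo $\sim$.} If $w \to u_1$ and $w \to u_2$, then there are $v_1 \sim v_2$ with $u_1 \overset{*}{\to} v_1$ and $u_2 \overset{*}{\to} v_2$.
\end{enumerate}
By the Huet version of Newman's lemma (termination together with (a) and (b)), any two $\to$-normal words reachable from $\sim$-equivalent words are $\sim$-equivalent. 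In particular the normal forms reachable from a single word $w$ form one $\sim$-class, which I denote $\mathrm{nf}(w)$.

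\textbf{Part~(\ref{normal:form:i}), the congruence.} Define $w \equiv w'$ if $\mathrm{nf}(w) = \mathrm{nf}(w')$.
\begin{itemize}
\item $\equiv$ is a congruence. Any reduction of $w$ is also a reduction of $wz$, so $\mathrm{nf}(wz) = \mathrm{nf}(\mathrm{nf}(w)\,z)$, and similarly on the left.
\item $\equiv$ contains the defining relations:
  \begin{itemize}
  \item $x_i x_i \to x_i$, taking $u$ empty;
  \item commuting unconnected letters is $\sim$;
  \item if $x_i \tot x_j$, then $x_i x_j x_i \overset{r}{\to} x_i x_j$, because $x_j$ has no arrow to $x_i$ (there are no $2$-cycles);
  \item likewise $x_j x_i x_j \overset{l}{\to} x_i x_j$, because $x_j$ has no arrow to $x_i$.
  \end{itemize}
\end{itemize}
Hence $\ker \pi \subseteq {\equiv}$, since $\ker\pi$ is the congruence generated by the defining relations.

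Now let $\pi(w) = \pi(w')$, with $w \overset{*}{\to} u$ and $w' \overset{*}{\to} v$ where $u, v \in \mathcal{N}$. Then $w \equiv w'$. Also $u \in \mathrm{nf}(w)$ and $v \in \mathrm{nf}(w')$, so $u$ and $v$ lie in the same $\sim$-class, that is, $u \sim v$.

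\textbf{Main obstacle.} The real work is lemma~(b), a case analysis on how the two cancelled patterns $x_i u x_i$ and $x_j u' x_j$ sit inside $w$: disjoint, nested, or interleaved, and for each pair, left or right.
\begin{itemize}
\item When the patterns are disjoint, or one lies entirely inside the $u$ of the other, the two cancellations commute.
\item When they share a letter ($i=j$ with overlapping occurrences, e.g.\ $x_i u x_i u' x_i$), cancelling the outer copies in different orders gives words that are reconciled by one further cancellation, possibly after commutations.
\item The delicate subcase is interleaved patterns with $i \ne j$ under mixed left/right cancellations. There I would use the arrow restrictions on $u$ and $u'$ to show that the relevant letters are unconnected, so that the resulting words differ only by commutations.
\end{itemize}
I would first try to reduce everything to cancellations whose $u$ contains no occurrence of $x_i$, choosing innermost pairs, to shrink the number of configurations.
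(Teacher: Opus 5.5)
\textbf{Comparison with the paper.} The paper gives no proof of this statement. It is quoted from Aragona and D'Andrea (Theorem~1 of~\cite{aragona20}) and used as a black box. Your proposal is therefore a genuinely different, self-contained route, and its architecture is sound:
\begin{itemize}
\item Part~(ii) is exactly the length argument you give.
\item Coherence~(a) holds in the one-step form your case analysis produces.
\item Termination modulo $\sim$ is immediate, since $\sim$ preserves length and $\to$ shortens words. So Huet's criterion applies.
\item The congruence $\equiv$ is a clean way to connect $\pi(w)=\pi(w')$ with the rewriting system, because only the defining relations need checking. You check them correctly: the absence of $2$-cycles is exactly what makes $x_ix_jx_i \overset{r}{\to} x_ix_j$ and $x_jx_ix_j \overset{l}{\to} x_ix_j$ legal.
\end{itemize}
The citation buys brevity. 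Your route buys a proof that relies only on a standard abstract-rewriting lemma.

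\textbf{Local confluence~(b).} The step you leave open is true, but your sketch puts the difficulty in the wrong place, and the mechanism you propose for the ``delicate'' case fails. Take $A\,x_i\,B\,x_j\,C\,x_i\,D\,x_j\,E$, right-cancel the $x_i$'s and left-cancel the $x_j$'s. These two side conditions only forbid $x_j \tot x_i$, so $x_i \tot x_j$ is allowed. The two one-step results, $A\,x_i\,B\,x_j\,C\,D\,x_j\,E$ and $A\,x_i\,B\,C\,x_i\,D\,x_j\,E$, are not even $\sim$-equivalent.

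This is harmless, because two cancellations occupying disjoint sets of positions commute exactly. Performing one only removes a letter from the middle segment of the other. The arrow conditions pass to subwords, and both orders delete the same two positions.

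So the only critical pairs are those sharing an occurrence, necessarily of a single letter $x_i$. Your reduction to innermost pairs is legitimate: a right (left) cancellation of $x_i\,u\,x_i$ yields the same word as the right (left) cancellation of the innermost pair with the same right (left) endpoint. After this reduction, two cases remain.
\begin{itemize}
\item Overlapping pairs inside $x_i\,B\,x_i\,C\,x_i$ close in at most one further step on each side, to identical words.
\item The same pair cancelled once rightward and once leftward gives $A\,x_i\,u\,B$ and $A\,u\,x_i\,B$. The two side conditions together make every letter of $u$ unconnected to $x_i$, so these words are $\sim$-equivalent.
\end{itemize}
The second case is the only place in~(b) where working modulo $\sim$ is actually needed.
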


\subsection{Idempotents in $\hk$}

For $X \subseteq V_{\Theta}$, denote by $\Theta[X]$ the full subgraph of $\Theta$ with vertex set $X$, that is,
$\Theta[X]$ is an oriented graph with vertex set $X$ and for $x_i, x_j \in X$, there is an arrow from $x_i$ to $x_j$ in $\Theta[X]$ if and only if there is an arrow from $x_i$ to $x_j$ in $\Theta$.

In~\cite[Theorem 3.6]{forsberg2017}, the following theorem is proved for a general finite simple oriented graph $\Theta$.
\begin{theorem}\label{theorem:forsberg}
   There is a bijection between idempotents in $\hk$ and full subgraphs of $\Theta$ which do not contain any oriented cycles.
\end{theorem}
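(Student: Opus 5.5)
The plan is to construct the bijection explicitly, sending an acyclic full subgraph $\Theta[X]$ to a product of its letters taken in a suitable order. If $\Theta[X]$ has no oriented cycles, fix a topological ordering of $X$. This means listing $X = \{x_{i_1}, \dots, x_{i_k}\}$ so that $x_{i_s} \tot x_{i_t}$ implies $s > t$, i.e.\ every arrow points backwards, and set
\[
\varepsilon_X = \pi(x_{i_1} x_{i_2} \cdots x_{i_k}).
\]
Two facts are needed about this definition. First, $\varepsilon_X$ is independent of the chosen ordering. Any two topological orderings of an acyclic graph are connected by swaps of adjacent letters that are not connected in $\Theta[X]$, hence not connected in $\Theta$. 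Each such swap is an elementary commutation, which holds in $\hk$. Second, $\varepsilon_X$ is idempotent, and this comes from Lemma~\ref{NF:lemma1}. In $w w$ with $w = x_{i_1}\cdots x_{i_k}$, look at the second occurrence of $x_{i_1}$. Between the two occurrences lie $x_{i_2}, \dots, x_{i_k}$, none of which has an arrow to $x_{i_1}$, since arrows go from later to earlier letters. So a right cancellation deletes the second $x_{i_1}$. Inductively, after deleting the second copies of $x_{i_1}, \dots, x_{i_{s-1}}$, the letters between the two occurrences of $x_{i_s}$ are $x_{i_{s+1}}, \dots, x_{i_k}$, and none of them has an arrow to $x_{i_s}$. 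Hence $ww \overset{*}{\to} w$, so $\varepsilon_X^2=\varepsilon_X$. The map $X \mapsto \varepsilon_X$ is injective because $c(\varepsilon_X) = X$.

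The main obstacle is surjectivity: every idempotent $f$ must equal $\varepsilon_{c(f)}$, and $\Theta[c(f)]$ must be acyclic. I would argue through normal forms. Let $u$ be a normal form of $f$. Then $uu$ represents $f$, and by Theorem~\ref{normal:form}(\ref{normal:form:ii}) we can reduce $uu$ to a normal form $v$ with $v \sim u$ by Theorem~\ref{normal:form}(\ref{normal:form:i}); in particular $\ell(v)=\ell(u)$, so exactly $\ell(u)$ letters are cancelled. I would first show that each letter occurs at most once in $u$. The idea is that cancellations inside $uu$ pair a letter of one copy with an occurrence of the same letter. A letter repeated within $u$ cannot be cancelled against its inner copy, since $u$ is reduced. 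A length count should then force each letter of the second copy to cancel against its counterpart in the first copy, and this is possible only if between the two occurrences no letter has an arrow to it (right cancellation) or from it (left cancellation). Making this pairing argument rigorous is the delicate step. It may be cleaner to induct on $\ell(u)$, treating the last cancellation performed, or to use the fact that the first letter of any normal form of $uu$ up to $\sim$ is constrained. As a fallback, the statement is exactly \cite[Theorem 3.6]{forsberg2017}, which the paper takes as known.

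Granting that $u$ is a word in distinct letters $x_{j_1}\cdots x_{j_k}$ such that $uu$ reduces to $u$, I would finish as follows. Examine the second occurrence of $x_{j_1}$ in $uu$. It can only be removed by a right cancellation with nothing between having an arrow to $x_{j_1}$, or by removing the first occurrence via a left cancellation. Repeating this forces the ordering to be compatible with arrows, or at least equivalent under $\sim$ to a topological ordering. An oriented cycle in $c(u)$ would contradict this compatibility, since no linear order puts every arrow of a cycle backwards. Hence $\Theta[c(f)]$ is acyclic and $f = \varepsilon_{c(f)}$, completing the bijection.
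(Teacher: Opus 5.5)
Your proposal has two problems. The first is a fixable orientation error; the second is a genuine gap in the hard direction.

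\textbf{The orientation is reversed.} If $x_{i_s}\tot x_{i_t}$ forces $s>t$, then the letters $x_{i_2},\dots,x_{i_k}$ following $x_{i_1}$ are exactly the ones that \emph{may} have arrows to $x_{i_1}$, so the right cancellation you invoke is blocked. For a single arrow $a\tot b$ your recipe gives $\varepsilon_{\{a,b\}}=\pi(ba)$, but $(ba)^2=b(aba)=bab=ab\neq ba$. Indeed, $ab$ and $ba$ are normal forms that are not $\sim$-equivalent, so they represent different elements by Theorem~\ref{normal:form}. You need the convention of Lemma~\ref{idempotent:formula}, namely $x_{i_s}\tot x_{i_t}\Rightarrow s<t$. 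With it, your cancellation argument, the independence of the ordering and $c(\varepsilon_X)=X$ do show that $X\mapsto\varepsilon_X$ is a well-defined injection from acyclic full subgraphs into idempotents.

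\textbf{Surjectivity is not proved.} You would need that every idempotent $f$ has $\Theta[c(f)]$ acyclic and equals $\varepsilon_{c(f)}$; this is the substance of the theorem. The one concrete principle you state is that a letter repeated in $u$ cannot be cancelled against its inner copy because $u$ is reduced. This is false for reductions of $uu$. Reducedness concerns $u$ in isolation, whereas during the reduction of $uu$ letters of the first copy get deleted, which can remove the letters that blocked a cancellation between two equal letters of that copy.

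For example, $u=xacx$ with $a\tot x\tot c$ and $a,c$ not connected is a normal form. Yet in $uu$ you can proceed as follows:
\begin{enumerate}
\item delete the first letter of the second copy, by a right cancellation against the adjacent $x$;
\item delete the $c$ of the first copy, by a left cancellation against the other $c$;
\item the two $x$'s of the first copy are now separated only by $a$, so the first of them is removed by a left cancellation.
\end{enumerate}
Likewise, ``a length count should force the pairing'' and ``repeating this forces the ordering to be compatible with arrows'' are not arguments, since what lies between two occurrences changes as cancellations are performed.

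For the hard direction your proposal therefore rests entirely on the fallback citation of \cite[Theorem~3.6]{forsberg2017}. That is in fact exactly what the paper does: it gives no proof of its own and only records that Forsberg's bijection sends an idempotent to the full subgraph on its content. To make your route self-contained, you would need an actual proof that a normal form of an idempotent has pairwise distinct letters and no arrows from right to left.
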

In the proof, an idempotent is mapped to the full subgraph of $\Theta$ whose vertex set is the content of the idempotent.
Therefore, for each $X \subseteq V_{\Theta}$ such that $\Theta[X]$ does not contain any oriented cycles, there exists exactly one idempotent in $\hk$ with content $X$.
We denote this idempotent by $e_X$.

We denote the following set
\[
\mathcal{A}_{\Theta} = \{X \subseteq V_{\Theta} \mid \Theta[X] \text{ does not contain any oriented cycles}\}.
\]

\begin{remark}\label{A-theta}
   If $Y \subseteq X \in \mathcal{A}_{\Theta}$, then $\Theta[Y]$ is a subgraph of $\Theta[X]$, and hence it does not contain any oriented cycles.
   Therefore, $Y \in \mathcal{A}_{\Theta}$.
\end{remark}

Theorem~\ref{theorem:forsberg} implies the following proposition.
\begin{proposition}\label{proposition:idempotents}
   The set 
   \[
   \{e_X \mid X \in \mathcal{A}_{\Theta} \}
   \]
   is the set of all idempotents in $\hk$.
\end{proposition}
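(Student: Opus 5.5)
The plan is to read this off from Theorem~\ref{theorem:forsberg} together with the remark following it, which records how the bijection is built. That bijection sends an idempotent $f \in \hk$ to the full subgraph $\Theta[c(f)]$. Two set inclusions then give the proposition.

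\emph{Every $e_X$ is an idempotent.} Let $X \in \mathcal{A}_{\Theta}$. Then $\Theta[X]$ is a full subgraph of $\Theta$ with no oriented cycles. Since the bijection of Theorem~\ref{theorem:forsberg} is surjective onto such subgraphs, there is an idempotent whose image is $\Theta[X]$. A full subgraph is determined by its vertex set, so this idempotent has content $X$. By injectivity it is the unique idempotent with content $X$, and this is exactly the element denoted $e_X$. Hence $\{e_X \mid X \in \mathcal{A}_{\Theta}\}$ consists of idempotents, and $X \mapsto e_X$ is well defined.

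\emph{Every idempotent is some $e_X$.} Let $f \in \hk$ be an idempotent and put $X = c(f)$. The bijection maps $f$ to $\Theta[X]$, and the codomain consists only of full subgraphs without oriented cycles. Therefore $\Theta[X]$ has no oriented cycles, so $X \in \mathcal{A}_{\Theta}$. Since $e_X$ is the unique idempotent with content $X$, we get $f = e_X$.

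The only point needing care is that the bijection really is given by the content map, so that the full subgraph attached to an idempotent is $\Theta[c(f)]$. The paper asserts this about the proof in~\cite{forsberg2017}, and I take it as given. Combining the two inclusions gives the claimed equality of sets.
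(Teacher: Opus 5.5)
Your proposal is correct and follows the paper's approach. The paper derives the proposition directly from Theorem~\ref{theorem:forsberg}, using that the bijection sends an idempotent to the full subgraph on its content, and your two inclusions spell out that deduction explicitly.
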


\begin{proposition}\label{hk_idempotents}
    Let $X, Y \in \mathcal{A}_{\Theta}$.
    Then $e_X e_Y$ is an idempotent in $\hk$ if and only if $X \cup Y \in \mathcal{A}_{\Theta}$ and $e_X e_Y = e_{X \cup Y}$.
\end{proposition}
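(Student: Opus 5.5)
The statement is an equivalence. The reverse implication is immediate, since $e_{X\cup Y}$ is an idempotent by Proposition~\ref{proposition:idempotents}. The content of the proposition is therefore the forward direction: if $e_Xe_Y$ is idempotent, then $X\cup Y\in\mathcal{A}_{\Theta}$ and $e_Xe_Y=e_{X\cup Y}$. The plan is to reduce everything to the content map together with the uniqueness part of Theorem~\ref{theorem:forsberg}.

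Suppose $f=e_Xe_Y$ is an idempotent. By Proposition~\ref{proposition:idempotents}, $f=e_Z$ for some $Z\in\mathcal{A}_{\Theta}$, and $Z=c(f)$. Since $c$ is a monoid homomorphism into $(\mathbbm{2}^{V_{\Theta}},\cup)$, we get
\[
c(f)=c(e_X)\cup c(e_Y)=X\cup Y.
\]
Hence $Z=X\cup Y$, so $X\cup Y\in\mathcal{A}_{\Theta}$. By the uniqueness statement following Theorem~\ref{theorem:forsberg} (there is exactly one idempotent with a given content in $\mathcal{A}_{\Theta}$), we conclude $e_Xe_Y=e_{X\cup Y}$.

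The only point that needs care is reading "if and only if $X\cup Y\in\mathcal{A}_{\Theta}$ and $e_Xe_Y=e_{X\cup Y}$" as a conjunction. Under that reading, the statement gives no claim that $X\cup Y\in\mathcal{A}_{\Theta}$ alone forces $e_Xe_Y$ to be idempotent, and such a claim would be false in general. For example, take $x_1\tot x_2$ with $X=\{x_2\}$ and $Y=\{x_1\}$. Then $X\cup Y\in\mathcal{A}_{\Theta}$, but $x_2x_1$ is not idempotent, because $x_2x_1x_2x_1=x_2x_1x_2\cdot x_1=x_1x_2x_1=x_1x_2\neq x_2x_1$. So I will state the proof for the conjunctive reading only. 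The argument has no real obstacle: it rests entirely on the content homomorphism and the bijection in Theorem~\ref{theorem:forsberg}.
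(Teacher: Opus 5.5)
Your proof is correct and matches the paper's argument: the reverse direction follows from Proposition~\ref{proposition:idempotents}, and the forward direction writes $e_Xe_Y=e_Z$ and takes contents to get $Z=X\cup Y$. Your side example with $x_1\tot x_2$, $X=\{x_2\}$, $Y=\{x_1\}$ is also correct: it shows that $X\cup Y\in\mathcal{A}_{\Theta}$ alone does not make $e_Xe_Y$ idempotent, which is why the extra equality $e_Xe_Y=e_{X\cup Y}$ is needed on the right-hand side.
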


\begin{proof}
    If $X \cup Y \in \mathcal{A}_{\Theta}$ and $e_X e_Y = e_{X \cup Y}$, then $e_X e_Y$ is an idempotent by Proposition~\ref{proposition:idempotents}.
    Conversely, if $e_X e_Y$ is an idempotent, then by Proposition~\ref{proposition:idempotents}, there exists $Z \in \mathcal{A}_{\Theta}$ such that 
    \[
    e_X e_Y = e_Z.
    \]
    Taking contents, we get 
    \[
    Z = c(e_Z) = c(e_X e_Y) = c(e_X) \cup c(e_Y) = X \cup Y. 
    \]
    Hence $X \cup Y \in \mathcal{A}_{\Theta}$ and $e_X e_Y = e_{X \cup Y}$.
\end{proof}

The idempotent $e_X$ can be written explicitly.
Note that if $\Theta[X]$ does not contain any oriented cycles, then we can write $X$ as $X = \{x_{i_1}, x_{i_2}, \dots, x_{i_k}\}$ such that
$x_{i_j} \tot x_{i_{j'}}$ implies $j < j'$ for all $j, j' \in \{1, 2, \dots, k\}$.

\begin{lemma}\label{idempotent:formula}
   Let $X \in \mathcal{A}_{\Theta}$.
   Then for any ordering of the set $X = \{x_{i_1}, x_{i_2}, \dots, x_{i_k}\}$ such that $x_{i_j} \tot x_{i_{j'}}$ implies $j < j'$ for all $j, j' \in \{1, 2, \dots, k\}$, we have
   \[
   e_X = x_{i_1} x_{i_2} \dots x_{i_k}.
   \]
   Moreover, we have $e_X z = z e_X = e_X$ for any $z \in \hk$ with $c(z) \subseteq X$.
\end{lemma}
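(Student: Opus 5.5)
The plan is to show first that the product $w = x_{i_1} x_{i_2} \dots x_{i_k}$ absorbs every letter of $X$ on both sides. From this we get that $w$ is an idempotent with content $X$. Theorem~\ref{theorem:forsberg} (equivalently Proposition~\ref{proposition:idempotents}) then forces $w = e_X$, since exactly one idempotent has content $X$. The absorption property for arbitrary $z$ with $c(z) \subseteq X$ follows by induction on the length of a word representing $z$.

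The key step is to show that $w x_{i_j} = w$ and $x_{i_j} w = w$ for every $j \in \{1, \dots, k\}$. For right absorption, write $w = u\, x_{i_j}\, v$ with $u = x_{i_1}\dots x_{i_{j-1}}$ and $v = x_{i_{j+1}} \dots x_{i_k}$. Then $w x_{i_j} = u\, x_{i_j} v x_{i_j}$. Every letter $x_{i_{j'}}$ of $v$ has $j' > j$, so $x_{i_{j'}} \tot x_{i_j}$ would force $j' < j$; hence no letter of $v$ has an arrow to $x_{i_j}$. Lemma~\ref{NF:lemma1} gives $x_{i_j} v x_{i_j} = x_{i_j} v$, so $w x_{i_j} = w$.

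Left absorption is symmetric. We have $x_{i_j} w = x_{i_j} u x_{i_j} v$. A letter $x_{i_{j'}}$ of $u$ has $j' < j$, so $x_{i_j} \tot x_{i_{j'}}$ is impossible; hence no letter of $u$ admits an arrow from $x_{i_j}$. Lemma~\ref{NF:lemma1} gives $x_{i_j} u x_{i_j} = u x_{i_j}$, so $x_{i_j} w = w$. The case $j=1$ or $j=k$ with empty $u$ or $v$ reduces directly to $x_{i_j}^2 = x_{i_j}$.

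With absorption of single letters in hand, induction on $\ell$ gives $w z = z w = w$ for any $z = x_{a_1}\cdots x_{a_\ell}$ with all $x_{a_t} \in X$. For instance, $w x_{a_1} \cdots x_{a_\ell} = w x_{a_2}\cdots x_{a_\ell} = \dots = w$, and similarly on the left. In particular, taking $z = w$ gives $w^2 = w$. Since $c(w) = X \in \mathcal{A}_{\Theta}$, uniqueness of the idempotent with given content yields $w = e_X$. This holds independently of the chosen compatible ordering, and the absorption statement for $e_X$ then follows.

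The main obstacle is only the careful application of Lemma~\ref{NF:lemma1}. One must check the direction of arrows in each of the two cases, and note that the lemma is applied inside the monoid to a product of letters lying in $X$.
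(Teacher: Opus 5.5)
Your proposal is correct and follows essentially the same route as the paper. Both arguments establish single-letter absorption on both sides via Lemma~\ref{NF:lemma1}, using the compatible ordering to rule out arrows from later letters to $x_{i_j}$ and from $x_{i_j}$ to earlier letters. Both then extend by induction to arbitrary $z$ with $c(z) \subseteq X$, deduce $w^2 = w$, and conclude $w = e_X$ from uniqueness of the idempotent with content $X$.
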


\begin{proof}
   Denote by 
   \[
   y = x_{i_1} x_{i_2} \dots x_{i_k}.
   \]
   If $x_{i_j} \in X$, then
   \[
   x_{i_{j+1}} x_{i_{j+2}} \dots x_{i_k}
   \]
   is obtained by multiplying letters that do not admit arrows to $x_{i_j}$.
   Lemma~\ref{NF:lemma1} implies
   \[
   y x_{i_j} = x_{i_1} \dots x_{i_{j-1}} (x_{i_j} x_{i_{j+1}} \dots x_{i_k} x_{i_j}) = x_{i_1} \dots x_{i_{j-1}} (x_{i_j} x_{i_{j+1}} \dots x_{i_k}) = y.
   \]
   By induction, we get 
   \[
   y x_{l_1} x_{l_2} \dots x_{l_m} = y,
   \]
   for any $x_{l_1}, x_{l_2}, \dots, x_{l_m} \in X$.
   This proves $yz = y$ for any $z \in \hk$ with $c(z) \subseteq X$.

   For any $x_{i_j} \in X$,
   \[
   x_{i_1} x_{i_2} \dots x_{i_{j-1}}
   \]
   is obtained by multiplying letters that do not admit arrows from $x_{i_j}$.
   Lemma~\ref{NF:lemma1} implies
   \[
   x_{i_j} y = (x_{i_j} x_{i_1} \dots x_{i_{j-1}} x_{i_j}) x_{i_{j+1}} \dots x_{i_k} = (x_{i_1} \dots x_{i_{j-1}} x_{i_j}) x_{i_{j+1}} \dots x_{i_k} = y.
   \]
   By induction, we again obtain 
   \[
   x_{l_1} x_{l_2} \dots x_{l_m} y = y,
   \]
   for any $x_{l_1}, x_{l_2}, \dots, x_{l_m} \in X$, which proves $zy = y$ for any $z \in \hk$ with $c(z) \subseteq X$.
   Since $c(y) = X$, we have
   \[
   y^2 = y y = y.
   \]
   Hence $y$ is an idempotent in $\hk$.
   Since $c(y) = X$, it follows that $y = e_X$.
\end{proof}

\begin{definition}
   A word $w \in F(V_{\Theta})$ is said to \emph{have no arrows from right to left} if 
   \[
   w = x_{i_1} x_{i_2} \dots x_{i_k},
   \]
   where $x_{i_1}, x_{i_2}, \dots, x_{i_k} \in F(V_{\Theta})$ are letters, and if there is an arrow from $x_{i_j}$ to $x_{i_{j'}}$ then $j < j'$ for all $j, j' \in \{1, 2, \dots, k\}$.

   Moreover, for $X \in \mathcal{A}_{\Theta}$, $w$ is said to be \emph{$X$-topological} if $w$ has no arrows from right to left, $c(w) = X$, and $\ell(w) = |X|$.
   In other words, $w$ is $X$-topological if there are no arrows from right to left and each letter $x_i$ with $x_i \in X$ occurs in $w$ exactly once.
\end{definition}

\begin{remark}\label{remark-top}
   If $X \in \mathcal{A}_{\Theta}$, then $\Theta[X]$ is an acyclic oriented graph and hence there exists an $X$-topological word.
   Also observe that the empty word is $\emptyset$-topological.
\end{remark}

\begin{lemma}\label{X-top}
   For $X \in \mathcal{A}_{\Theta}$, a word $w \in F(V_{\Theta})$ is $X$-topological if and only if $w$ is a normal form of $e_X$.
\end{lemma}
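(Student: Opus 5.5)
Both directions rest on Lemma~\ref{idempotent:formula} and Theorem~\ref{normal:form}.

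\emph{($\Rightarrow$)} Suppose $w = x_{i_1} \dots x_{i_k}$ is $X$-topological. The ordering $x_{i_1}, \dots, x_{i_k}$ of $X$ is then one in which $x_{i_j} \tot x_{i_{j'}}$ implies $j<j'$, so Lemma~\ref{idempotent:formula} gives $\pi(w) = e_X$. Every elementary cancellation, left or right, needs some letter to occur at least twice in $w$. In $w$ each letter occurs exactly once, so no cancellation applies. Hence $w$ is a normal form of $e_X$.

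\emph{($\Leftarrow$)} Let $w$ be any normal form of $e_X$. By Remark~\ref{remark-top} there is an $X$-topological word $t$, and by the first direction $t \in \mathcal{N}$ with $\pi(t) = e_X = \pi(w)$. Apply Theorem~\ref{normal:form}(\ref{normal:form:i}) with $w \overset{*}{\to} w$ and $t \overset{*}{\to} t$, both trivial since both words are normal forms. This gives $w \sim t$, so $w$ is obtained from $t$ by a finite chain of elementary commutations.

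It remains to check that $X$-topological words are closed under an elementary commutation. Such a move swaps two adjacent letters that are not connected in $\Theta$. It preserves content and length. It also preserves the relative order of every pair of letters joined by an arrow, since only an unconnected pair changes order. So if $t$ has no arrows from right to left, neither does the new word. By induction along the chain, $w$ is $X$-topological.

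The only real obstacle is the appeal to Theorem~\ref{normal:form}(\ref{normal:form:i}): one has to see that a normal form is its own simplifying sequence of length zero, so the theorem forces all normal forms of the same element to be $\sim$-equivalent. Everything else is bookkeeping.
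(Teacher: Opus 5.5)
Your proof is correct and follows the paper's argument essentially step for step. The forward direction uses Lemma~\ref{idempotent:formula} plus the fact that no letter repeats; the reverse direction compares $w$ with an $X$-topological word via Theorem~\ref{normal:form}(\ref{normal:form:i}) and notes that elementary commutations preserve content, length, and the order of letters joined by an arrow, with your trivial reductions $w \overset{*}{\to} w$ and $t \overset{*}{\to} t$ merely spelling out a step the paper leaves implicit.
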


\begin{proof}
   If $w$ is $X$-topological, then $\pi(w) = e_X$ by Lemma~\ref{idempotent:formula}.
   Moreover, since no letter in $w$ occurs more than once, no further elementary cancellations may be performed on $w$, and hence $w$ is a normal form of $e_X$.

   Now assume $w$ is a normal form of $e_X$.
   Since $X \in \mathcal{A}_{\Theta}$, there exists an $X$-topological word $v$ by Remark~\ref{remark-top}.
   Hence, there are no arrows from right to left in $v$, $c(v) = X$, and $\ell(v) = |X|$.
   By the previous paragraph, $v$ is a normal form of $e_X$.
   By Theorem~\ref{normal:form}~(\ref{normal:form:i}), we have $w \sim v$.
   Since elementary commutations only interchange letters that are not connected, the relative order of any two letters that are connected by an arrow is preserved under $\sim$.
   Hence, since $v$ has no arrows from right to left, the same is true for $w$.
   Since $\sim$ also preserves content and length, we have $c(w) = c(v) = X$ and $\ell(w) = \ell(v) = |X|$.
   This implies that $w$ is $X$-topological.
\end{proof}

\begin{lemma}\label{IDS:no_arrows}
   Let $X, Y \in \mathcal{A}_{\Theta}$.
   If there are no arrows starting in $Y$ and ending in $X$ and $X \cap Y = \emptyset$, then $X \cup Y \in \mathcal{A}_{\Theta}$ and
   \[
   e_X e_Y = e_{X \cup Y}.
   \]
\end{lemma}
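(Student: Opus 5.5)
The plan is to build an explicit $(X \cup Y)$-topological word by concatenation, and then read off both conclusions from Lemma~\ref{idempotent:formula} and Lemma~\ref{X-top}.

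First, I will show that $X \cup Y \in \mathcal{A}_{\Theta}$. Suppose $\Theta[X \cup Y]$ contains an oriented cycle. Since $X \in \mathcal{A}_{\Theta}$ and $Y \in \mathcal{A}_{\Theta}$, the cycle cannot lie entirely in $X$ or entirely in $Y$. Using $X \cap Y = \emptyset$, the cycle therefore contains a vertex of $X$ and a vertex of $Y$. Walking along the cycle from a vertex of $Y$, we must at some point pass from a vertex in $Y$ to a vertex in $X$. That step is an arrow starting in $Y$ and ending in $X$, which the hypothesis forbids. Hence $X \cup Y \in \mathcal{A}_{\Theta}$.

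Next, by Remark~\ref{remark-top}, choose an $X$-topological word $u$ and a $Y$-topological word $v$, and consider the concatenation $w = uv$. I claim $w$ is $(X \cup Y)$-topological.
\begin{itemize}
\item Its content is $X \cup Y$.
\item Its length is $|X| + |Y| = |X \cup Y|$, by disjointness.
\item It has no arrows from right to left. An arrow between two letters both in $u$, or both in $v$, already goes left to right. An arrow between a letter of $u$ (in $X$) and a letter of $v$ (in $Y$) must go from $X$ to $Y$ by hypothesis, so it also goes left to right in $w$.
\end{itemize}

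Finally, Lemma~\ref{idempotent:formula} gives $\pi(u) = e_X$, $\pi(v) = e_Y$ and $\pi(w) = e_{X \cup Y}$. Since $\pi$ is a monoid homomorphism,
\[
e_X e_Y = \pi(u)\pi(v) = \pi(uv) = e_{X\cup Y}.
\]
The only step needing care is the cycle argument in the first step; everything else is direct bookkeeping.
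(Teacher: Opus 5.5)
Your proof is correct and follows essentially the same route as the paper. You rule out oriented cycles in $\Theta[X \cup Y]$ using disjointness and the absence of arrows from $Y$ to $X$, then concatenate an $X$-topological and a $Y$-topological word to obtain an $(X \cup Y)$-topological word; the only cosmetic difference is that you identify these words with $e_X$, $e_Y$, $e_{X\cup Y}$ directly via Lemma~\ref{idempotent:formula}, whereas the paper starts from normal forms and passes through Lemma~\ref{X-top}.
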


\begin{proof}
   Since $X \cap Y = \emptyset$ and there are no arrows starting in $Y$ and ending in $X$, any oriented cycle in $\Theta[X \cup Y]$ must entirely lie in either $X$ or $Y$.
   Since $X, Y \in \mathcal{A}_{\Theta}$, it follows that there are no oriented cycles in $\Theta[X \cup Y]$, and hence $X \cup Y \in \mathcal{A}_{\Theta}$.
   Let $w_X$ be a normal form of $e_X$ and let $w_Y$ be a normal form of $e_Y$.
   Lemma~\ref{X-top} implies that $w_X$ is $X$-topological and $w_Y$ is $Y$-topological.
   Since there are no arrows from right to left in $w_X$ and $w_Y$, respectively, and there are no arrows starting in $Y$ and ending in $X$, there are no arrows from right to left in the word $w_X w_Y$.
   Moreover, we have
   \[
   c(w_X w_Y) = c(w_X) \cup c(w_Y) = X \cup Y,
   \]
   and 
   \[
   \ell(w_X w_Y) = \ell(w_X) + \ell(w_Y) = |X| + |Y| = |X \cup Y|.
   \]
   Hence $w_X w_Y$ is $X \cup Y$-topological.
   Therefore, Lemma~\ref{X-top} implies that $w_X w_Y$ is a normal form of $e_{X \cup Y}$, and hence
   \[
   e_X e_Y = \pi(w_X) \pi(w_Y) = \pi(w_X w_Y) = e_{X \cup Y}.
   \]
\end{proof}

\section{Properties of Idempotents in $\hk$}\label{PROP}

In this section, we give some additional properties of idempotents in $\hk$, which are used in Section~\ref{HOM}.

\begin{definition}\label{definition:p}
   For $X, Y \in \mathcal{A}_{\Theta}$, define the predicate $p_{\Theta}$ as follows.
   Condition $p_{\Theta}(X, Y)$ holds if and only if there do not exist $x_b \in Y \setminus X$, $x_a \in X \setminus Y$, $r \in \mathbb{N}_0$, and $x_{i_1}, \dots, x_{i_r} \in X \cap Y$ such that 
   \[
   x_b \tot x_{i_1} \tot \dots \tot x_{i_r} \tot x_a.
   \]
\end{definition}

\begin{remark}
   If $r = 0$, a chain $x_b \tot x_{i_1} \tot \dots \tot x_{i_r} \tot x_a$ is interpreted as $x_b \tot x_a$.
\end{remark}

\begin{lemma}
   Let $X, Y \in \mathcal{A}_{\Theta}$.
   Then $p_{\Theta}(X, Y)$ holds if and only if there does not exist a directed path in $\Theta[X \cup Y]$ whose startpoint lies in $Y \setminus X$ and whose endpoint lies in $X \setminus Y$.
\end{lemma}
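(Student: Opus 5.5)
We prove the equivalence through the two contrapositives: a chain witnessing failure of $p_{\Theta}(X,Y)$ is itself a directed path of the required kind, and conversely any directed path of the required kind contains such a chain.

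First suppose $p_{\Theta}(X,Y)$ fails. Then there are $x_b \in Y \setminus X$, $x_a \in X \setminus Y$, $r \in \mathbb{N}_0$ and $x_{i_1}, \dots, x_{i_r} \in X \cap Y$ with
\[
x_b \tot x_{i_1} \tot \dots \tot x_{i_r} \tot x_a .
\]
Every vertex of this chain lies in $X \cup Y$. Since $\Theta[X \cup Y]$ is a full subgraph, each arrow of the chain is also an arrow of $\Theta[X \cup Y]$. So the chain is a directed walk in $\Theta[X\cup Y]$ from $Y\setminus X$ to $X\setminus Y$. If paths are required to have distinct vertices, we shorten the walk by deleting any closed segments between repeated vertices. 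The endpoints stay the same, so we obtain a directed path with startpoint in $Y \setminus X$ and endpoint in $X \setminus Y$.

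Conversely, suppose $v_0 \tot v_1 \tot \dots \tot v_s$ is a directed path in $\Theta[X \cup Y]$ with $v_0 \in Y \setminus X$ and $v_s \in X \setminus Y$. Note that $s \geq 1$, since $Y\setminus X$ and $X\setminus Y$ are disjoint.
\begin{itemize}
\item Let $j$ be the largest index with $v_j \in Y \setminus X$. This exists because $v_0$ qualifies, and $j<s$.
\item Let $k$ be the smallest index greater than $j$ with $v_k \notin X \cap Y$. This exists because $v_s \notin X\cap Y$.
\end{itemize}
Every $v_t$ lies in $X\cup Y$, which is the disjoint union of $X \setminus Y$, $Y\setminus X$ and $X \cap Y$. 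Since $k>j$, the maximality of $j$ gives $v_k \notin Y\setminus X$, and by choice $v_k \notin X \cap Y$, so $v_k \in X \setminus Y$. By the minimality of $k$, the vertices $v_{j+1},\dots,v_{k-1}$ all lie in $X \cap Y$. Therefore
\[
v_j \tot v_{j+1} \tot \dots \tot v_{k-1} \tot v_k
\]
is a chain as in Definition~\ref{definition:p}, with $r = k-j-1 \geq 0$ intermediate vertices. Hence $p_{\Theta}(X,Y)$ fails. This extraction of the sub-chain is the only step requiring any care; the rest is unwinding the definitions.
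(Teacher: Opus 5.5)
Your proof is correct and follows essentially the same route as the paper's: the easy direction reads the witnessing chain as a directed path in $\Theta[X \cup Y]$. The other direction extracts a sub-chain starting at the last vertex in $Y \setminus X$ and ending at the first later vertex outside $X \cap Y$; the paper instead takes the first later vertex in $X \setminus Y$, which is equivalent. Your walk-shortening remark is harmless but not needed, since the chain already has distinct vertices: a repeated intermediate vertex would give an oriented cycle in $\Theta[X]$, and the endpoints lie in sets disjoint from $X \cap Y$ and from each other.
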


\begin{proof}
   If there is no directed path in $\Theta[X \cup Y]$ which starts in some vertex of $Y \setminus X$ and ends in some vertex of $X \setminus Y$, then $p_{\Theta}(X, Y)$ must hold.
   Assume $p_{\Theta}(X, Y)$ holds.
   Suppose there exists a directed path
   \[
   x_{i_1} \tot x_{i_2} \tot \dots \tot x_{i_k},
   \]
   such that $x_{i_1} \in Y \setminus X$, $x_{i_k} \in X \setminus Y$, and $x_{i_1}, x_{i_2}, \dots, x_{i_k} \in X \cup Y$.
   Let
   \[
   j_1 = \max \{j \in \{1, \dots, k\} \mid x_{i_j} \in Y \setminus X \} \quad \text{and} \quad j_2 = \min \{j \in \{j_1 + 1, \dots, k\} \mid x_{i_j} \in X \setminus Y \}.
   \]
   Since $x_{i_k} \in X \setminus Y$ and $j_1 < k$, the index $j_2$ is well-defined, and $j_1 < j_2$.
   Then the directed path
   \[
   x_{i_{j_1}} \tot x_{i_{j_1 + 1}} \tot \dots \tot x_{i_{j_2 - 1}} \tot x_{i_{j_2}} 
   \]
   starts in $Y \setminus X$ and ends in $X \setminus Y$.
   Moreover, the vertices $x_{i_{j_1 + 1}}, \dots, x_{i_{j_2 - 1}}$ lie in $X \cap Y$; by the maximality of $j_1$, none of them lies in $Y \setminus X$, and by the minimality of $j_2$, none of them lies in $X \setminus Y$.
   Since all vertices of the directed path lie in $X \cup Y$, the vertices $x_{i_{j_1 + 1}}, \dots, x_{i_{j_2 - 1}}$ must lie in $X \cap Y$.
   This contradicts the fact that $p_{\Theta}(X, Y)$ holds and concludes the proof.
\end{proof}

Let $w = x_{i_1} x_{i_2} \dots x_{i_k} \in F(V_{\Theta})$ be a word, where $x_{i_1}, x_{i_2}, \dots, x_{i_k} \in F(V_{\Theta})$ are letters.
A word of the form $u = x_{i_{j_1}} x_{i_{j_2}} \dots x_{i_{j_l}}$ with $1 \leq j_1 < j_2 < \dots < j_l \leq k$ is called a \emph{quasi-subword} of $w$.
Moreover, the empty word is a quasi-subword of any word.
From the definition of $\to$, it follows that if $u \overset{*}{\to} v$, then $v$ is a quasi-subword of $u$.

\begin{proposition}\label{prop1}
   Let $X, Y \in \mathcal{A}_{\Theta}$.
   If $e_X e_Y$ is an idempotent in $\hk$, then $p_{\Theta}(X, Y)$ holds.
\end{proposition}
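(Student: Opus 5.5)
We argue by contraposition: assume $p_{\Theta}(X,Y)$ fails and show that $e_X e_Y$ is not an idempotent. By Proposition~\ref{hk_idempotents}, it suffices to show that $e_X e_Y \neq e_{X\cup Y}$, or that $X\cup Y\notin\mathcal{A}_{\Theta}$. So fix a chain
\[
x_b \tot x_{i_1} \tot \dots \tot x_{i_r} \tot x_a
\]
with $x_b\in Y\setminus X$, $x_a\in X\setminus Y$ and all $x_{i_s}\in X\cap Y$. If $X\cup Y\notin\mathcal{A}_{\Theta}$ we are done, so assume $X\cup Y\in\mathcal{A}_{\Theta}$, and suppose for contradiction that $e_Xe_Y=e_{X\cup Y}$.

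The plan is to compare normal forms. Take an $X$-topological word $w_X$ and a $Y$-topological word $w_Y$. Then $\pi(w_Xw_Y)=e_{X\cup Y}$. By Theorem~\ref{normal:form}~(\ref{normal:form:ii}), some simplifying sequence leads from $w_Xw_Y$ to a normal form $u$ of $e_{X\cup Y}$. By Lemma~\ref{X-top}, $u$ is $(X\cup Y)$-topological. Since $u$ is a quasi-subword of $w_Xw_Y$, each letter of $X\setminus Y$ in $u$ is the unique copy coming from $w_X$, and each letter of $Y\setminus X$ is the unique copy coming from $w_Y$. In particular, $x_a$ precedes $x_b$ in $u$. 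Each $x_{i_s}\in X\cap Y$ survives in $u$ either as its copy in $w_X$ or as its copy in $w_Y$.

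Next we use that $u$ has no arrows from right to left, applied along the chain. Since $x_b\tot x_{i_1}$, the letter $x_{i_1}$ must come after $x_b$ in $u$. The copy of $x_{i_1}$ in $w_X$ lies before all of $w_Y$, hence before $x_b$, so the surviving copy of $x_{i_1}$ is the one in $w_Y$. Inductively, since $x_{i_s}\tot x_{i_{s+1}}$ and $x_{i_s}$ lies in the $w_Y$-part, the letter $x_{i_{s+1}}$ must come after it, which forces its $w_Y$-copy as well. Finally, $x_{i_r}\tot x_a$ requires $x_a$ to lie after $x_{i_r}$ (or after $x_b$ when $r=0$). But $x_a$ lies in the $w_X$-part, which is before everything from $w_Y$. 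This is a contradiction.

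The subtle point is making the positional argument rigorous: the letters of $u$ inherit positions from $w_Xw_Y$, because $u$ is a quasi-subword, so "comes from $w_X$" implies "precedes every letter that comes from $w_Y$". The quasi-subword remark stated just before the proposition provides exactly this. We should also check that "no arrows from right to left" concerns pairs of occurrences in $u$, so that it applies to the surviving copies. With these two observations, the induction along the chain is routine.
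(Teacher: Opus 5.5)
Your proposal is correct and takes essentially the same route as the paper. You assume the chain exists and $e_Xe_Y=e_{X\cup Y}$, reduce $w_Xw_Y$ to an $(X\cup Y)$-topological normal form $u$, use the quasi-subword property to get $x_a$ before $x_b$ in $u$, and obtain the opposite order from the chain. The one difference is that you track which copy of each $x_{i_s}$ survives; this is harmless but unnecessary, since the paper simply chains the order relations inside $u$ (where every letter occurs exactly once) to get $x_b$ before $x_a$ directly.
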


\begin{proof}
   Assume $p_{\Theta}(X, Y)$ does not hold.
   We show that $e_X e_Y$ is not an idempotent.
   Suppose that, on the contrary, $e_X e_Y$ is an idempotent.
   We now construct a contradiction.
   Let $w_X$ be a normal form of $e_X$ and let $w_Y$ be a normal form of $e_Y$.
   Lemma~\ref{X-top} implies that $w_X$ is $X$-topological and $w_Y$ is $Y$-topological.
   We also have $\pi(w_X w_Y) = e_X e_Y$.
   Since $e_X e_Y$ is an idempotent, by Proposition~\ref{hk_idempotents}, we have 
   \[
   X \cup Y \in \mathcal{A}_{\Theta} \quad \text{and} \quad e_X e_Y = e_{X \cup Y}.
   \]
   Let $u \in F(V_{\Theta})$ be a normal form of $e_{X \cup Y}$ with $w_X w_Y \overset{*}{\to} u$.
   Such a word $u$ exists by Theorem~\ref{normal:form}~(\ref{normal:form:ii}) and $\pi(w_X w_Y) = e_X e_Y = e_{X \cup Y}$.
   By Lemma~\ref{X-top}, $u$ is $X \cup Y$-topological.

   Since $p_{\Theta}(X, Y)$ does not hold, there exist $x_b \in Y \setminus X$, $x_a \in X \setminus Y$, $r \in \mathbb{N}_0$, and $x_{i_1}, \dots, x_{i_r} \in X \cap Y$ such that 
   \[
   x_b \tot x_{i_1} \tot \dots \tot x_{i_r} \tot x_a.
   \]
   Observe that each of the letters $x_a$ and $x_b$ occurs exactly once in $w_X w_Y$, and hence they occur exactly once in $u$, since an elementary cancellation cannot delete a letter that occurs only once.
   Moreover, $x_a$ occurs to the left of $x_b$ in $w_X w_Y$, since $x_a$ occurs in $w_X$ and $x_b$ occurs in $w_Y$.
   Since $w_X w_Y \overset{*}{\to} u$, $u$ is a quasi-subword of $w_X w_Y$, and hence $x_a$ occurs to the left of $x_b$ in $u$ as well.
   
   If $r = 0$, then $x_b \tot x_a$.
   Hence $x_a$ must occur to the right of $x_b$ in $u$, since there are no arrows from right to left in $u$, as $u$ is $X \cup Y$-topological.
   This contradicts the last conclusion of the previous paragraph, which established that $x_a$ occurs to the left of $x_b$ in $u$.
   Now assume $r > 0$.
   Since $u$ is $X \cup Y$-topological and $x_{i_1}, \dots, x_{i_r} \in X \cap Y \subseteq X \cup Y$, each of the letters $x_{i_1}, \dots, x_{i_r}$ occurs exactly once in $u$.
   Since $x_b \tot x_{i_1}$ and $u$ has no arrows from right to left, $x_{i_1}$ occurs to the right of $x_b$ in $u$.
   It follows that for $1 < t \leq r$, $x_{i_t}$ occurs to the right of $x_{i_{t-1}}$ in $u$, since $x_{i_{t-1}} \tot x_{i_t}$ and $u$ has no arrows from right to left.
   Inductively, we conclude that each $x_{i_t}$ for $1 \leq t \leq r$ occurs to the right of $x_b$ in $u$.
   Finally, it follows from $x_{i_r} \tot x_a$ and the fact that $u$ has no arrows from right to left that $x_a$ occurs to the right of $x_{i_r}$ in $u$, and hence $x_a$ occurs to the right of $x_b$ in $u$.
   In the previous paragraph, however, it is established that $x_a$ occurs to the left of $x_b$ in $u$.
   Hence we have reached a contradiction, and the proof is complete.
\end{proof}

\begin{proposition}\label{prop2}
   Let $X, Y \in \mathcal{A}_{\Theta}$.
   If $p_{\Theta}(X, Y)$ holds, then $e_X e_Y$ is an idempotent in $\hk$.
\end{proposition}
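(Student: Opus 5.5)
Assume $p_{\Theta}(X, Y)$ holds; the plan is to show directly that $e_X e_Y = e_{X \cup Y}$, with $X \cup Y \in \mathcal{A}_{\Theta}$. Then $e_X e_Y$ is an idempotent by Proposition~\ref{proposition:idempotents}. The idea is to factor both idempotents through the common part $Z = X \cap Y$, and to absorb the repeated copy of $e_Z$ using Lemma~\ref{idempotent:formula}.

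\emph{Step 1: decompose $X$.} Put $A = X \setminus Y$, $B = Y \setminus X$ and $Z = X \cap Y$. Split $Z$ into two sets:
\begin{itemize}
\item $Z_1$, the vertices of $Z$ from which there is a directed path inside $\Theta[Z \cup A]$ ending in $A$;
\item $Z_2 = Z \setminus Z_1$.
\end{itemize}
$Z_1$ is closed under going forward (within $Z$) along paths toward $A$, and $Z_2$ admits no path within $X$ into $A \cup Z_1$. Concretely, there is no arrow from $Z_2$ to $A \cup Z_1$: if $z \in Z_2$ had an arrow to $A$ or to $Z_1$, then $z$ would itself lie in $Z_1$. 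All these sets lie in $\mathcal{A}_{\Theta}$ by Remark~\ref{A-theta}. By Lemma~\ref{IDS:no_arrows} we get $e_X = e_{A \cup Z_1} e_{Z_2}$.

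\emph{Step 2: decompose $Y$.} The predicate $p_{\Theta}(X,Y)$ says there is no path $b \to z_1 \to \dots \to a$ with $b \in B$, all $z_i \in Z$ and $a \in A$. Hence there is no arrow from $B$ into $Z_1$ (any such arrow would extend to a forbidden path) and none from $B$ into $A$. There is also no arrow from $Z_2$ into $Z_1$. So in $Y = Z_1 \cup Z_2 \cup B$ there are no arrows from $Z_2 \cup B$ into $Z_1$, and Lemma~\ref{IDS:no_arrows} gives $e_Y = e_{Z_1} e_{Z_2 \cup B}$.

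\emph{Step 3: recombine.} We now have
\[
e_X e_Y = e_{A \cup Z_1}\, e_{Z_2}\, e_{Z_1}\, e_{Z_2 \cup B}.
\]
We have $e_{Z_2} e_{Z_1}$ with no arrows from $Z_1$ to $Z_2$? Not necessarily, so instead we use the arrow structure in the other direction. Since there are no arrows from $Z_2$ to $Z_1$, Lemma~\ref{IDS:no_arrows} gives $e_{Z_1} e_{Z_2} = e_Z$, but we need to handle the order $e_{Z_2} e_{Z_1}$.

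The cleaner route is to refine the decomposition so the pieces appear in compatible order. Write $e_X = e_{Z_1 \cup A}\, e_{Z_2}$ and $e_Y = e_{Z_1}\, e_{Z_2 \cup B}$, but reorder $e_{Z_1 \cup A}$ using topological words: a $(Z_1 \cup A)$-topological word can be followed by $Z_1$-letters, which are absorbed from the right by Lemma~\ref{idempotent:formula}. This requires first commuting $e_{Z_1}$ past $e_{Z_2}$. That is exactly the main obstacle, and I expect to handle it as follows.

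Instead of moving $e_{Z_1}$ across $e_{Z_2}$, choose $Z_1$ differently. Let $Z_1$ be the set of vertices of $Z$ \emph{reachable from $B$} by paths within $Z \cup B$, and let $Z_2 = Z \setminus Z_1$. By $p_{\Theta}$, no vertex of $Z_1$ has a path within $Z$ to $A$, and there is no arrow from $Z_1 \cup B$ to $A$. There is also no arrow from $B \cup Z_1$ to $Z_2$, since the target would then be reachable from $B$ and so lie in $Z_1$. Lemma~\ref{IDS:no_arrows} then gives
\[
e_Y = e_{Z_2}\, e_{Z_1 \cup B}, \qquad e_X = e_{A \cup Z_2}\, e_{Z_1}.
\]
The second factorisation uses that there are no arrows from $Z_1$ to $A \cup Z_2$. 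Therefore
\[
e_X e_Y = e_{A \cup Z_2}\, e_{Z_1}\, e_{Z_2}\, e_{Z_1 \cup B}.
\]
There are no arrows from $Z_1$ to $Z_2$, so $e_{Z_2} e_{Z_1} = e_Z$. The remaining task is to show $e_{Z_1} e_{Z_2} = e_{Z_2} e_{Z_1}$. The normal forms of these two products must coincide: if some arrow from $Z_2$ to $Z_1$ existed, the two orders would differ. My plan here is to instead absorb directly. By Lemma~\ref{idempotent:formula}, $e_{A \cup Z_2}\, e_{Z_2} = e_{A \cup Z_2}$, so I will first commute $e_{Z_1}$ and $e_{Z_2}$ via the explicit word argument, as in the proof of Lemma~\ref{IDS:no_arrows}. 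If that commutation fails, I will fall back on the normal-form machinery of Theorem~\ref{normal:form}: take the word $w_A w_{Z_2} w_{Z_1} w_{Z_2} w_{Z_1} w_B$ and cancel the second $w_{Z_2}$ leftwards. Cancelling a letter $z \in Z_2$ this way requires that no intervening letter of $Z_1$ have an arrow to $z$, which holds. This produces the $(X \cup Y)$-topological word $w_A w_{Z_2} w_{Z_1} w_B$, and hence $e_X e_Y = e_{X \cup Y}$.
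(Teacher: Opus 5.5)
\textbf{Gap in the recombination step.} Your final decomposition is exactly the paper's: your $Z_1$ and $Z_2$ are its $R$ and $S$, and $A\cup Z_2$, $Z_1\cup B$ are its $N$ and $M$. Your check that no arrows go from $Z_1\cup B$ to $A\cup Z_2$ is correct, and so are the factorisations $e_X=e_{A\cup Z_2}e_{Z_1}$ and $e_Y=e_{Z_2}e_{Z_1\cup B}$. The recombination, however, does not work.

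The ``remaining task'' $e_{Z_1}e_{Z_2}=e_{Z_2}e_{Z_1}$ is false in general. Take $X=\{s,r\}$ and $Y=\{b,s,r\}$ with $s\tot r$ and $b\tot r$. Then $p_{\Theta}(X,Y)$ holds vacuously, $Z_1=\{r\}$, $Z_2=\{s\}$, and $rs\neq sr$.

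The fallback word argument is also broken. The words $w_Aw_{Z_2}$ and $w_{Z_1}w_B$ do not represent $e_{A\cup Z_2}$ and $e_{Z_1\cup B}$ once there is an arrow from $Z_2$ to $A$ or from $B$ to $Z_1$. By the very definition of $Z_1$, such an arrow from $B$ into $Z_1$ exists whenever $Z_1\neq\emptyset$. So your word does not represent $e_Xe_Y$, and the final word $w_Aw_{Z_2}w_{Z_1}w_B$ is not $(X\cup Y)$-topological. In the example above it yields $srb$, whereas $e_Xe_Y=sbr$.

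\textbf{The fix: absorb rather than commute.} Since $e_{A\cup Z_2}e_{Z_1}=e_X$ and $Z_2\subseteq X$, Lemma~\ref{idempotent:formula} gives $e_Xe_{Z_2}=e_X$. Since $Z_1\subseteq Z_1\cup B$, the same lemma gives $e_{Z_1}e_{Z_1\cup B}=e_{Z_1\cup B}$. Hence
\[
e_Xe_Y=e_{A\cup Z_2}e_{Z_1}e_{Z_2}e_{Z_1\cup B}=e_{A\cup Z_2}e_{Z_1}e_{Z_1\cup B}=e_{A\cup Z_2}e_{Z_1\cup B}.
\]
Now apply Lemma~\ref{IDS:no_arrows} to the disjoint sets $A\cup Z_2$ and $Z_1\cup B$, which have no arrows from the second to the first. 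It gives $X\cup Y\in\mathcal{A}_{\Theta}$ and $e_Xe_Y=e_{X\cup Y}$. This is precisely the paper's chain $e_Xe_Y=e_Xe_Se_M=e_Xe_M=e_Ne_Re_M=e_Ne_M$.

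If you prefer a word argument, use an $(A\cup Z_2)$-topological word and a $(Z_1\cup B)$-topological word as blocks, not concatenations of the pieces.

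The same absorption would also have completed your abandoned first decomposition. There $e_Xe_{Z_1}=e_X$ and $e_{Z_2}e_{Z_2\cup B}=e_{Z_2\cup B}$ give $e_Xe_Y=e_{A\cup Z_1}e_{Z_2\cup B}$. You had already shown that no arrows go from $Z_2\cup B$ into $A\cup Z_1$.
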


\begin{proof}
   Assume $p_{\Theta}(X, Y)$ holds.
   To prove that $e_X e_Y$ is an idempotent, by Proposition~\ref{hk_idempotents}, we need to show $X \cup Y \in \mathcal{A}_{\Theta}$ and $e_X e_Y = e_{X \cup Y}$.
   
   Denote $I = X \cap Y$ and
   \[
   R = \{x_i \in I \mid \exists x_b \in Y \setminus X, r \in \mathbb{N}_0,
   x_{i_1}, \dots, x_{i_r} \in I
   : x_b \tot x_{i_1} \tot \dots \tot x_{i_r} \tot x_i\}.
   \]
   Also set $S = I \setminus R$.
   We denote by
   \begin{equation}\label{M-N}
      M = (Y \setminus X) \cup R \quad \text{and} \quad N = (X \setminus Y) \cup S. 
   \end{equation}
   Observe that since $R, S \subseteq I = X \cap Y$, the above unions are disjoint.
   Then we further have 
   \begin{equation}\label{xy_decomp}
      X = N \cup R, \quad Y = M \cup S, \quad X \cup Y = N \cup M.
   \end{equation}
   The unions on the right-hand sides of these equalities are disjoint as well.
   The key observation is the following claim.
   \begin{claim}
      There are no arrows starting in $M$ and ending in $N$.
   \end{claim}

   \begin{proof}[Proof of the claim]
      We argue by contradiction.
      Let $x_i \in M$, $x_j \in N$ and suppose $x_i \tot x_j$.
      Now we consider cases on $x_i$ and $x_j$ based on~\eqref{M-N}.
      Since $p_{\Theta}(X, Y)$ holds, we cannot have $x_i \in Y \setminus X$ and $x_j \in X \setminus Y$.
      If $x_i \in Y \setminus X$ and $x_j \in S$, this implies $x_j \in R$ by the definition of $R$, which is a contradiction since $R \cap S = \emptyset$.
      If $x_i \in R$ and $x_j \in X \setminus Y$, then by the definition of $R$, there exist $x_b \in Y \setminus X$, $r \in \mathbb{N}_0$, and $x_{i_1}, \dots, x_{i_r} \in I$ such that
      \[
      x_b \tot x_{i_1} \tot \dots \tot x_{i_r} \tot x_i \tot x_j.
      \]
      Since $x_i \in R \subseteq I$ and $x_j \in X \setminus Y$, this contradicts the fact that $p_{\Theta}(X, Y)$ holds.
      If $x_i \in R$ and $x_j \in S$, we have some $x_b \in Y \setminus X$, $r \in \mathbb{N}_0$, and $x_{i_1}, \dots, x_{i_r} \in I$ such that 
      \[
      x_b \tot x_{i_1} \tot \dots \tot x_{i_r} \tot x_i \tot x_j.
      \]
      Since $x_i \in R \subseteq I$, this implies that $x_j \in R$.
      Thus $x_j \in R \cap S$, which is again a contradiction.
      The above cases are exhaustive by~\eqref{M-N}.
      Hence, in all cases, we have reached a contradiction, and therefore there are no arrows starting in $M$ and ending in $N$. 
   \end{proof}

   Since $N \subseteq X \in \mathcal{A}_{\Theta}$ and 
   $M \subseteq Y \in \mathcal{A}_{\Theta}$, Remark~\ref{A-theta} implies $N, M \in \mathcal{A}_{\Theta}$.
   Since $N \cap M = \emptyset$ and there are no arrows starting in $M$ and ending in $N$, Lemma~\ref{IDS:no_arrows} yields 
   \[
   N \cup M \in \mathcal{A}_{\Theta} \quad \text{and} \quad e_N e_M = e_{N \cup M}.
   \]
   Since $X \cup Y = N \cup M$ by \eqref{xy_decomp}, we obtain 
   \begin{equation}\label{t1}
      X \cup Y \in \mathcal{A}_{\Theta} \quad \text{and} \quad e_N e_M = e_{X \cup Y}.
   \end{equation}
   
   Since $R, S \subseteq X \cup Y \in \mathcal{A}_{\Theta}$, we also have $R, S \in \mathcal{A}_{\Theta}$ by Remark~\ref{A-theta}.
   As $R \subseteq M$ and there are no arrows starting in $M$ and ending in $N$, there are no arrows starting in $R$ and ending in $N$.
   Since $N \cap R = \emptyset$, Lemma~\ref{IDS:no_arrows} implies 
   \begin{equation}\label{t2}
      e_N e_R = e_{N \cup R} = e_X.
   \end{equation}
   Similarly, since $S \subseteq N$, there are no arrows starting in $M$ and ending in $S$.
   Since $S \cap M = \emptyset$, Lemma~\ref{IDS:no_arrows} implies 
   \begin{equation}\label{t3}
      e_S e_M = e_{S \cup M} = e_Y.
   \end{equation}
   Since $c(e_S) = S \subseteq X$ and $c(e_R) = R \subseteq M$, Lemma~\ref{idempotent:formula} implies 
   \begin{equation}\label{t4}
      e_X e_S = e_X \quad \text{and} \quad e_R e_M = e_M.
   \end{equation}
   Using \eqref{t1}, \eqref{t2}, \eqref{t3}, and \eqref{t4}, we obtain
   \[
   e_X e_Y = e_X e_S e_M = e_X e_M = e_N e_R e_M = e_N e_M = e_{X \cup Y}.
   \]
   This concludes the proof.
\end{proof}

Propositions~\ref{prop1} and~\ref{prop2} imply the following theorem.
\begin{theorem}\label{ide-p}
   Let $X, Y \in \mathcal{A}_{\Theta}$.
   Then $e_X e_Y$ is an idempotent in $\hk$ if and only if $p_{\Theta}(X, Y)$ holds.
\end{theorem}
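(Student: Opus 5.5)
The statement of Theorem~\ref{ide-p} is the conjunction of the two implications already established, so the plan is simply to combine them. For fixed $X, Y \in \mathcal{A}_{\Theta}$, the forward direction, ``if $e_X e_Y$ is an idempotent then $p_{\Theta}(X,Y)$ holds,'' is exactly Proposition~\ref{prop1}. The reverse direction, ``if $p_{\Theta}(X,Y)$ holds then $e_X e_Y$ is an idempotent,'' is exactly Proposition~\ref{prop2}. Invoking both yields the equivalence.

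For completeness, I would recall the structure of the two ingredients so the reader sees where the content lies.

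For Proposition~\ref{prop1}, the argument is by contradiction. If $e_Xe_Y$ is idempotent, Proposition~\ref{hk_idempotents} gives $e_Xe_Y=e_{X\cup Y}$. We reduce the concatenation $w_Xw_Y$ of topological normal forms to a normal form $u$, using Theorem~\ref{normal:form}~(\ref{normal:form:ii}). By Lemma~\ref{X-top}, $u$ is $(X\cup Y)$-topological. Now take a forbidden chain $x_b\tot\cdots\tot x_a$. Since $u$ is a quasi-subword of $w_Xw_Y$, the letter $x_a$ appears before $x_b$ in $u$; on the other hand, the chain forces $x_a$ after $x_b$, a contradiction.

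For Proposition~\ref{prop2}, the key step is splitting $X\cap Y$ into the part $R$ reachable from $Y\setminus X$ through $X\cap Y$ and its complement $S$. One then checks that no arrows go from $M=(Y\setminus X)\cup R$ to $N=(X\setminus Y)\cup S$. After that, repeated use of Lemma~\ref{IDS:no_arrows} and the absorption property in Lemma~\ref{idempotent:formula} gives $e_Xe_Y=e_{X\cup Y}$.

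No real obstacle remains at this stage, since both directions are proved. The only point to check is that the hypotheses agree: both propositions assume $X,Y\in\mathcal{A}_{\Theta}$, which is exactly the hypothesis of the theorem. Hence the proof is a two-line citation.
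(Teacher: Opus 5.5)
Your proposal is correct and matches the paper exactly: Theorem~\ref{ide-p} is obtained by combining Proposition~\ref{prop1} (necessity) and Proposition~\ref{prop2} (sufficiency), which share the hypothesis $X, Y \in \mathcal{A}_{\Theta}$. Your summaries of the two ingredients also match the paper's arguments.
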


\begin{lemma}\label{lemma:join}
   Let $X, Y \in \mathcal{A}_{\Theta}$ and assume that $X \cup Y \in \mathcal{A}_{\Theta}$ as well.
   Then $e_{X \cup Y} e_X$ and $e_Y e_{X \cup Y}$ are idempotents in $\hk$.
\end{lemma}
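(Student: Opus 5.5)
The quickest route is to use the second part of Lemma~\ref{idempotent:formula} directly, with Theorem~\ref{ide-p} as a cross-check. Since $X \cup Y \in \mathcal{A}_{\Theta}$, the idempotent $e_{X \cup Y}$ exists. Moreover $c(e_X) = X \subseteq X \cup Y$ and $c(e_Y) = Y \subseteq X \cup Y$, so Lemma~\ref{idempotent:formula}, applied with $z = e_X$ and with $z = e_Y$, gives
\[
e_{X \cup Y} e_X = e_{X \cup Y} \quad \text{and} \quad e_Y e_{X \cup Y} = e_{X \cup Y}.
\]
Both products therefore equal the idempotent $e_{X \cup Y}$ and are idempotents, by Proposition~\ref{proposition:idempotents}. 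I would present this as the proof; it needs nothing beyond the lemma just cited.

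As an alternative consistent with this section, one can verify the predicate and invoke Theorem~\ref{ide-p}. For $e_{X \cup Y} e_X$ we need $p_{\Theta}(X \cup Y, X)$. Here the second argument minus the first is $X \setminus (X \cup Y) = \emptyset$, so no chain can start there and the predicate holds vacuously. For $e_Y e_{X \cup Y}$ we need $p_{\Theta}(Y, X \cup Y)$. Here the first argument minus the second is $Y \setminus (X \cup Y) = \emptyset$, so no chain can end there, and the predicate again holds vacuously.

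There is essentially no obstacle. The only point needing care is matching the hypothesis $c(z) \subseteq X \cup Y$ of Lemma~\ref{idempotent:formula} to the correct side of the product: $e_{X \cup Y}$ must appear on the left in the first product and on the right in the second. The lemma is stated for both sides, so either placement is covered.
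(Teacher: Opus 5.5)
Your proof is correct, but your main argument is not the paper's. Your ``alternative'' is exactly the paper's proof: it observes that $p_{\Theta}(X \cup Y, X)$ and $p_{\Theta}(Y, X \cup Y)$ hold vacuously and invokes Theorem~\ref{ide-p}.

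Your primary route instead applies the absorption statement of Lemma~\ref{idempotent:formula} to $e_{X \cup Y}$, with $z = e_X$ and with $z = e_Y$. This is more elementary. It needs neither Theorem~\ref{ide-p} nor Proposition~\ref{prop2}, the direction the paper actually relies on here, with its $M, N, R, S$ decomposition. It also gives more: the identities $e_{X \cup Y} e_X = e_Y e_{X \cup Y} = e_{X \cup Y}$ come out at once. The paper recovers these in the proof of Theorem~\ref{theorem:p} only by combining Lemma~\ref{lemma:join} with Proposition~\ref{hk_idempotents}.

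The paper's route has the merit of presenting the lemma as a direct instance of the section's new criterion, Theorem~\ref{ide-p}. Your route shows that the lemma is logically just a consequence of a fact already available in the preliminaries.
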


\begin{proof}
   Since $X \setminus (X \cup Y) = \emptyset$ and $Y \setminus (X \cup Y) = \emptyset$, conditions $p_{\Theta}(X \cup Y, X)$ and $p_{\Theta}(Y, X \cup Y)$ vacuously hold.
   The conclusion follows from Theorem~\ref{ide-p}.
\end{proof}

\begin{theorem}\label{theorem:p}
   Let $X, Y \in \mathcal{A}_{\Theta}$.
   Then $p_{\Theta}(X, Y)$ holds if and only if
   \[
   e_X e_Y e_X = e_Y e_X e_Y = e_X e_Y.
   \]
\end{theorem}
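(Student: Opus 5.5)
The plan is to derive both implications from Theorem~\ref{ide-p} together with Lemma~\ref{idempotent:formula} and Lemma~\ref{lemma:join}.

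For the forward implication, assume $p_{\Theta}(X, Y)$ holds. Theorem~\ref{ide-p} and Proposition~\ref{hk_idempotents} then give $X \cup Y \in \mathcal{A}_{\Theta}$ and $e_X e_Y = e_{X \cup Y}$. The idempotent $e_{X\cup Y}$ absorbs anything whose content lies in $X\cup Y$, by Lemma~\ref{idempotent:formula}. Since $c(e_X) = X \subseteq X \cup Y$ and $c(e_Y) = Y \subseteq X\cup Y$, we obtain
\[
e_X e_Y e_X = e_{X\cup Y} e_X = e_{X \cup Y} \quad \text{and} \quad e_Y e_X e_Y = e_Y e_{X\cup Y} = e_{X\cup Y}.
\]
Both products therefore equal $e_X e_Y$. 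Lemma~\ref{lemma:join} is consistent with this step, but it is not needed, because absorption already gives the equalities directly.

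For the converse, assume $e_X e_Y e_X = e_Y e_X e_Y = e_X e_Y$. The aim is to show that $e_X e_Y$ is idempotent and then apply Theorem~\ref{ide-p}. Using $e_Y^2 = e_Y$ and the hypothesis $e_Y e_X e_Y = e_X e_Y$, we compute
\[
(e_X e_Y)^2 = e_X (e_Y e_X e_Y) = e_X e_X e_Y = e_X e_Y.
\]
So $e_X e_Y$ is an idempotent, and Theorem~\ref{ide-p} yields $p_{\Theta}(X,Y)$. This direction uses only the relation $e_Y e_X e_Y = e_X e_Y$. The other relation, $e_X e_Y e_X = e_X e_Y$, would give the same conclusion by the symmetric computation $(e_Xe_Y)^2 = (e_X e_Y e_X) e_Y = e_X e_Y e_Y = e_X e_Y$.

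There is no real obstacle. All the combinatorial difficulty was already handled in Propositions~\ref{prop1} and~\ref{prop2}. The only point needing care is to invoke the absorption property of Lemma~\ref{idempotent:formula} with the correct content inclusions $X, Y \subseteq X \cup Y$, which requires first knowing that $X\cup Y\in\mathcal{A}_{\Theta}$, so that $e_{X\cup Y}$ is defined.
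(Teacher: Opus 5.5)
Your proof is correct and follows the paper's argument: both directions reduce to Theorem~\ref{ide-p}, the converse by checking $(e_Xe_Y)^2=e_Xe_Y$ and the forward direction by using $e_Xe_Y=e_{X\cup Y}$. The only differences are minor. You obtain $e_{X\cup Y}e_X=e_Ye_{X\cup Y}=e_{X\cup Y}$ directly from the absorption part of Lemma~\ref{idempotent:formula}, which is slightly more elementary than the paper's route via Lemma~\ref{lemma:join} and Proposition~\ref{hk_idempotents}, and you use the relation $e_Ye_Xe_Y=e_Xe_Y$ where the paper uses $e_Xe_Ye_X=e_Xe_Y$.
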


\begin{proof}
   Assume
   \[
   e_X e_Y e_X = e_Y e_X e_Y = e_X e_Y
   \]
   holds.
   Then 
   \[
   (e_X e_Y)^2 = e_X e_Y e_X e_Y = (e_X e_Y e_X) e_Y = e_X e_Y e_Y = e_X e_Y,
   \]
   which implies that $e_X e_Y$ is an idempotent, and hence by Theorem~\ref{ide-p}, $p_{\Theta}(X, Y)$ holds.

   Conversely, assume $p_{\Theta}(X, Y)$ holds.
   By Theorem~\ref{ide-p}, $e_X e_Y$ is an idempotent.
   By Proposition~\ref{hk_idempotents}, it follows that $X \cup Y \in \mathcal{A}_{\Theta}$ and 
   \begin{equation}\label{z}
      e_X e_Y = e_{X \cup Y}.
   \end{equation}
   Since $X, Y, X \cup Y \in \mathcal{A}_{\Theta}$, Lemma~\ref{lemma:join} implies that $e_{X \cup Y} e_X$ and $e_Y e_{X \cup Y}$ are idempotents as well.
   By Proposition~\ref{hk_idempotents}, we further have
   \[
   e_{X \cup Y} e_X = e_{(X \cup Y) \cup X} = e_{X \cup Y} \quad \text{and} \quad e_Y e_{X \cup Y} = e_{Y \cup (X \cup Y)} = e_{X \cup Y}.
   \]
   Using~\eqref{z}, we conclude that 
   \[
   e_X e_Y e_X = e_Y e_X e_Y = e_X e_Y.
   \]
\end{proof}

\begin{corollary}\label{cor:p}
   Let $X, Y \in \mathcal{A}_{\Theta}$.
   Then $p_{\Theta}(X, Y)$ and $p_{\Theta}(Y, X)$ both hold if and only if $e_X e_Y = e_Y e_X$.
\end{corollary}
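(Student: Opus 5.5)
The plan is to derive Corollary~\ref{cor:p} directly from Theorem~\ref{ide-p} and Theorem~\ref{theorem:p}, together with Proposition~\ref{hk_idempotents}. Both directions reduce to comparing $e_X e_Y$ and $e_Y e_X$ with the single idempotent $e_{X \cup Y}$.

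\emph{Forward direction.} Assume $p_{\Theta}(X, Y)$ and $p_{\Theta}(Y, X)$ both hold. By Theorem~\ref{ide-p}, both $e_X e_Y$ and $e_Y e_X$ are idempotents. Proposition~\ref{hk_idempotents} then gives $X \cup Y \in \mathcal{A}_{\Theta}$ and $e_X e_Y = e_{X \cup Y}$. Applying the same proposition to the pair $(Y, X)$ gives $e_Y e_X = e_{Y \cup X} = e_{X \cup Y}$. Hence $e_X e_Y = e_Y e_X$.

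\emph{Converse.} Assume $e_X e_Y = e_Y e_X$. Since $e_X$ and $e_Y$ are idempotents, a product of commuting idempotents is again idempotent:
\[
(e_X e_Y)^2 = e_X (e_Y e_X) e_Y = e_X (e_X e_Y) e_Y = e_X e_Y .
\]
Theorem~\ref{ide-p} therefore yields $p_{\Theta}(X, Y)$. Because $e_Y e_X = e_X e_Y$, the product $e_Y e_X$ is the same idempotent, so Theorem~\ref{ide-p} applied to the pair $(Y, X)$ yields $p_{\Theta}(Y, X)$.

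As an alternative route for the converse, one could check the identities of Theorem~\ref{theorem:p} directly:
\[
e_X e_Y e_X = e_X e_X e_Y = e_X e_Y \quad \text{and} \quad e_Y e_X e_Y = e_X e_Y e_Y = e_X e_Y ,
\]
and symmetrically with $X$ and $Y$ swapped. There is no real obstacle in either direction. The only point requiring care is that Theorem~\ref{ide-p} must be invoked separately for the ordered pairs $(X, Y)$ and $(Y, X)$, since $p_{\Theta}$ is not symmetric.
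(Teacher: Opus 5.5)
Your proof is correct, and your alternative route for the converse is exactly the paper's argument: it verifies the identities of Theorem~\ref{theorem:p} for both ordered pairs using commutativity. For the forward direction, the paper applies Theorem~\ref{theorem:p} to $(X,Y)$ and $(Y,X)$ and chains $e_X e_Y = e_Y e_X e_Y = e_Y e_X$, whereas you observe directly that $e_X e_Y$ and $e_Y e_X$ are both idempotents with content $X \cup Y$ (Theorem~\ref{ide-p} and Proposition~\ref{hk_idempotents}); this is slightly more direct, since Theorem~\ref{theorem:p} is itself derived from exactly these facts.
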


\begin{proof}
   If $p_{\Theta}(X, Y)$ and $p_{\Theta}(Y, X)$ both hold, then by Theorem~\ref{theorem:p}, we have 
   \[
   e_X e_Y e_X = e_Y e_X e_Y = e_X e_Y \quad \text{and} \quad e_Y e_X e_Y = e_X e_Y e_X = e_Y e_X,
   \]
   which implies $e_X e_Y = e_Y e_X$.

   Conversely, if $e_X e_Y = e_Y e_X$, then 
   \[
   e_X e_Y e_X = e_X (e_Y e_X) = e_X e_X e_Y = e_X e_Y = e_Y e_X,
   \]
   and
   \[
   e_Y e_X e_Y = (e_Y e_X) e_Y = e_X e_Y e_Y = e_X e_Y = e_Y e_X,
   \]
   which implies 
   \[
   e_X e_Y e_X = e_Y e_X e_Y = e_X e_Y \quad \text{and} \quad e_Y e_X e_Y = e_X e_Y e_X = e_Y e_X.
   \]
   Theorem~\ref{theorem:p} implies that $p_{\Theta}(X, Y)$ and $p_{\Theta}(Y, X)$ both hold.
\end{proof}

\section{Homomorphisms from $\hkg$ to $\hk$}\label{HOM}

In this section, we study the set $\mathrm{Hom}(\hkg, \hk)$ of all monoid homomorphisms from $\hkg$ to $\hk$.

\begin{theorem}\label{theorem:p-pure}
    Let $\vp : \{y_1, y_2, \dots, y_m\} \to \hk$ be a map defined on the generators of $\hkg$.
    The map $\vp$ extends to a monoid homomorphism from $\hkg$ to $\hk$ if and only if there exist sets $X_1, X_2, \dots, X_m$ with
    $X_i \subseteq V_{\Theta}$ such that, for all $i, j \in \{1, 2, \dots, m\}$, the following conditions hold:
    \begin{enumerate}[label=(\roman*), ref=\roman*]
        \item\label{p1} $X_i \in \mathcal{A}_{\Theta}$;
        \item\label{p2} $\vp(y_i) = e_{X_i}$;
        \item\label{p3} $p_{\Theta}(X_i, X_j)$ and $p_{\Theta}(X_j, X_i)$ both hold if $y_i$ and $y_j$ are not connected in $\Gamma$;
        \item\label{p4} $p_{\Theta}(X_i, X_j)$ holds if $y_i \tog y_j$.
    \end{enumerate}
\end{theorem}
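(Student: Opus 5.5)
The plan is to use the universal property of the presentation of $\hkg$. The map $\vp$ extends to a monoid homomorphism if and only if the images $\vp(y_1), \dots, \vp(y_m)$ satisfy the defining relations of $\hkg$ in $\hk$. These relations are $\vp(y_i)^2 = \vp(y_i)$; $\vp(y_i)\vp(y_j) = \vp(y_j)\vp(y_i)$ when $y_i, y_j$ are not connected in $\Gamma$; and $\vp(y_i)\vp(y_j)\vp(y_i) = \vp(y_j)\vp(y_i)\vp(y_j) = \vp(y_i)\vp(y_j)$ when $y_i \tog y_j$. So the task is to translate each relation into conditions (\ref{p1})--(\ref{p4}).

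\emph{Necessity.} Suppose $\vp$ extends. Since $y_i^2 = y_i$ in $\hkg$, each $\vp(y_i)$ is an idempotent of $\hk$. By Proposition~\ref{proposition:idempotents} there is $X_i \in \mathcal{A}_\Theta$ with $\vp(y_i) = e_{X_i}$, which gives (\ref{p1}) and (\ref{p2}). Indeed $X_i = c(\vp(y_i))$, so these sets are uniquely determined.
\begin{itemize}
\item If $y_i, y_j$ are not connected in $\Gamma$, then $e_{X_i} e_{X_j} = e_{X_j} e_{X_i}$. Corollary~\ref{cor:p} then gives (\ref{p3}).
\item If $y_i \tog y_j$, then $e_{X_i}e_{X_j}e_{X_i} = e_{X_j}e_{X_i}e_{X_j} = e_{X_i}e_{X_j}$. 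Theorem~\ref{theorem:p} then gives $p_\Theta(X_i, X_j)$, which is (\ref{p4}).
\end{itemize}

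\emph{Sufficiency.} Assume sets $X_i$ satisfying (\ref{p1})--(\ref{p4}) exist. Then each $\vp(y_i) = e_{X_i}$ is idempotent, so the first relation holds. The converse directions of Corollary~\ref{cor:p} and Theorem~\ref{theorem:p} give the commutation relations for non-connected pairs and the braid-type relations for arrows $y_i \tog y_j$. Since every defining relation of $\hkg$ is preserved, the induced homomorphism $F(V_\Gamma) \to \hk$ factors through $\hkg$.

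There is no real obstacle here: all the content lies in Theorem~\ref{theorem:p} and Corollary~\ref{cor:p}. The only care needed is in the case $i = j$. Condition (\ref{p3}) could formally apply there, since $y_i$ is not connected to itself, but $p_\Theta(X_i, X_i)$ holds trivially because $X_i \setminus X_i = \emptyset$. Similarly, the arrow case never arises with $i = j$, since $\Gamma$ has no loops.
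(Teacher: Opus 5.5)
Your proposal is correct and follows essentially the same route as the paper: idempotency of the images yields (i)--(ii) via Proposition~\ref{proposition:idempotents}, and Corollary~\ref{cor:p} and Theorem~\ref{theorem:p} convert the commutation and braid-type relations into (iii) and (iv) in both directions. Your remark on $i=j$ is harmless: the paper only applies ``not connected'' to distinct vertices, and in any case $p_{\Theta}(X_i,X_i)$ holds vacuously.
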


\begin{proof}
    Assume that the map $\vp$ extends to a monoid homomorphism from $\hkg$ to $\hk$, which we also denote by $\vp$.
    Since a monoid homomorphism maps idempotents to idempotents, $\vp(y_i)$ is an idempotent in $\hk$ for all $i \in \{1, 2, \dots, m\}$.
    By Proposition~\ref{proposition:idempotents}, $\vp(y_i)$ is of the form $e_X$ for some $X \in \mathcal{A}_{\Theta}$.
    Hence there exist sets $X_1, X_2, \dots, X_m$ with $X_i \in \mathcal{A}_{\Theta}$ for $i \in \{1, 2, \dots, m\}$, such that 
    \[
    \vp(y_i) = e_{X_i},
    \]
    for all $i \in \{1, 2, \dots, m\}$, which proves~\eqref{p1} and~\eqref{p2}.
    Now let $i, j \in \{1, 2, \dots, m\}$.
    
    \textbf{Case 1: $y_i$ and $y_j$ are not connected in $\Gamma$.}
    In this case, we have 
    \[
    y_i y_j = y_j y_i.
    \]
    Applying $\vp$ yields
    \[
    e_{X_i} e_{X_j} = e_{X_j} e_{X_i}.
    \]
    Corollary~\ref{cor:p} implies that $p_{\Theta}(X_i, X_j)$ and $p_{\Theta}(X_j, X_i)$ both hold. 
    This proves \eqref{p3}.

    \textbf{Case 2: $y_i \tog y_j$.}
    In this case, we have
    \[
    y_i y_j y_i = y_j y_i y_j = y_i y_j,
    \]
    and hence by applying $\vp$, we obtain
    \[
    e_{X_i} e_{X_j} e_{X_i} = e_{X_j} e_{X_i} e_{X_j} = e_{X_i} e_{X_j}.
    \]
    Theorem~\ref{theorem:p} implies that $p_{\Theta}(X_i, X_j)$ holds.
    This proves~\eqref{p4}.

    We have proved the necessity of the condition stated in the theorem.
    Now we prove sufficiency.
    We claim that the elements $\vp(y_1), \vp(y_2), \dots, \vp(y_m)$ satisfy the defining relations of $\hkg$.
    Hence, we need to show that $e_{X_1}, e_{X_2}, \dots, e_{X_m}$ satisfy the defining relations of $\hkg$.
    Each $e_{X_i}$ for $i \in \{1, 2, \dots, m\}$ is an idempotent by Proposition~\ref{proposition:idempotents}.
    Let $i, j \in \{1, 2, \dots, m\}$.

    \textbf{Case 1: $y_i$ and $y_j$ are not connected in $\Gamma$.}
    We need to show that
    \[
    e_{X_i} e_{X_j} = e_{X_j} e_{X_i}.
    \]
    By assumption, $p_{\Theta}(X_i, X_j)$ and $p_{\Theta}(X_j, X_i)$ both hold, and hence the 
    desired equality follows from Corollary~\ref{cor:p}.
    
    \textbf{Case 2: $y_i \tog y_j$.}
    We need to show that
    \[
    e_{X_i} e_{X_j} e_{X_i} = e_{X_j} e_{X_i} e_{X_j} = e_{X_i} e_{X_j}.
    \]
    By assumption, $p_{\Theta}(X_i, X_j)$ holds, and hence the desired equality follows from 
    Theorem~\ref{theorem:p}.
     
    Hence, the elements $\vp(y_1), \vp(y_2), \dots, \vp(y_m)$ satisfy the defining relations of $\hkg$, and thus we can extend $\vp$ to a monoid homomorphism from $\hkg$ to $\hk$.
\end{proof}

Denote by
\[
\mathcal{S}_{\Gamma, \Theta} = \{(X_1, X_2, \dots, X_m) \mid X_i \subseteq V_{\Theta} \text{ for } i \in \{1, 2, \dots, m\} \},
\]
the set of all sequences of length $m$ of subsets of $V_{\Theta}$.
\begin{definition}\label{definition:p-pure}
   A sequence $(X_1, X_2, \dots, X_m)$ of length $m$ of subsets of $V_{\Theta}$ is said to be $(\Gamma, \Theta)$\emph{-admissible} if, for all $i, j \in \{1, 2, \dots, m\}$, the following conditions hold: 
   \begin{enumerate}[label=(\roman*), ref=\roman*]
        \item $X_i \in \mathcal{A}_{\Theta}$;
        \item $p_{\Theta}(X_i, X_j)$ and $p_{\Theta}(X_j, X_i)$ both hold if $y_i$ and $y_j$ are not connected in $\Gamma$;
        \item $p_{\Theta}(X_i, X_j)$ holds if $y_i \tog y_j$.
    \end{enumerate}
\end{definition}

We also define
\[
\sgt = \{(X_1, X_2, \dots, X_m) \in \mathcal{S}_{\Gamma, \Theta} \mid (X_1, X_2, \dots, X_m) \text{ is $(\Gamma, \Theta)$-admissible} \},
\]
and the map $\Phi : \mathrm{Hom}(\hkg, \hk) \to \mathcal{S}_{\Gamma, \Theta}$ by 
\[
\Phi(\vp) = \big(c(\vp(y_1)), c(\vp(y_2)), \dots, c(\vp(y_m))\big).
\]

\begin{proposition}\label{phi:bijective}
   The map $\Phi : \mathrm{Hom}(\hkg, \hk) \to \sgt$ is a bijection.
\end{proposition}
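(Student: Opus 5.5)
The plan is to derive all three parts of the claim from Theorem~\ref{theorem:p-pure}: $\Phi$ is well defined with values in $\sgt$, it is injective, and it is surjective.

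\textbf{Well-definedness.} Let $\vp \in \mathrm{Hom}(\hkg, \hk)$. By the necessity part of Theorem~\ref{theorem:p-pure}, there are sets $X_1, \dots, X_m$ satisfying (\ref{p1})--(\ref{p4}). Since $\vp(y_i) = e_{X_i}$, we get $c(\vp(y_i)) = c(e_{X_i}) = X_i$. Hence
\[
\Phi(\vp) = (X_1, \dots, X_m).
\]
Conditions (\ref{p1}), (\ref{p3}) and (\ref{p4}) are exactly the three conditions of Definition~\ref{definition:p-pure}. So $\Phi(\vp) \in \sgt$.

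\textbf{Injectivity.} Suppose $\Phi(\vp) = \Phi(\psi)$, and write $X_i = c(\vp(y_i)) = c(\psi(y_i))$. Both $\vp(y_i)$ and $\psi(y_i)$ are idempotents, being images of idempotents. By Theorem~\ref{theorem:forsberg} and Proposition~\ref{proposition:idempotents}, an idempotent is determined by its content, so $\vp(y_i) = e_{X_i} = \psi(y_i)$ for every $i$. Two monoid homomorphisms that agree on the generators $y_1, \dots, y_m$ of $\hkg$ are equal, so $\vp = \psi$.

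\textbf{Surjectivity.} Let $(X_1, \dots, X_m) \in \sgt$. Each $X_i$ lies in $\mathcal{A}_{\Theta}$, so $e_{X_i}$ is defined; set $\vp(y_i) = e_{X_i}$. The sets $X_i$ then satisfy (\ref{p1})--(\ref{p4}) of Theorem~\ref{theorem:p-pure}: (\ref{p2}) holds by construction and the rest is admissibility. The sufficiency part of that theorem extends $\vp$ to a homomorphism, and $\Phi(\vp) = (c(e_{X_1}), \dots, c(e_{X_m})) = (X_1, \dots, X_m)$.

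There is no real obstacle, since the content is all in Theorem~\ref{theorem:p-pure}. The one point needing care is that $c(e_X) = X$ identifies the idempotent uniquely; this is exactly the content of Theorem~\ref{theorem:forsberg}.
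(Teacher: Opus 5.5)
Your proposal is correct and follows the paper's proof essentially step for step. Well-definedness and surjectivity come from the necessity and sufficiency directions of Theorem~\ref{theorem:p-pure}, and injectivity comes from three facts: homomorphisms are determined by the images of the generators, generators map to idempotents, and idempotents are determined by their content.
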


\begin{proof}
   Let $\vp \in \mathrm{Hom}(\hkg, \hk)$.
   By Theorem~\ref{theorem:p-pure}, there exists a $(\Gamma, \Theta)$-admissible sequence $(X_1, X_2, \dots, X_m)$ such that
   \[
   \vp(y_i) = e_{X_i},
   \]
   for all $i \in \{1, 2, \dots, m\}$.
   Hence, 
   \[
   \Phi(\vp) = (c(\vp(y_1)), c(\vp(y_2)), \dots, c(\vp(y_m))) = (X_1, X_2, \dots, X_m),
   \]
   which proves that $\Phi$ maps into $\sgt$.

   Since every homomorphism from $\hkg$ to $\hk$ is uniquely determined by the images of the generators,
   since every homomorphism from $\hkg$ to $\hk$ maps generators to idempotents, and
   since idempotents in $\hk$ are uniquely determined by their content, $\Phi$ is injective.
   
   Let $(X_1, X_2, \dots, X_m) \in \sgt$.
   By Theorem~\ref{theorem:p-pure}, the map $y_i \mapsto e_{X_i}$ extends to a monoid homomorphism $\vp$ from $\hkg$ to $\hk$.
   Since $c(\vp(y_i)) = c(e_{X_i}) = X_i$ for $i \in \{1, 2, \dots, m\}$, we obtain $\Phi(\vp) = (X_1, X_2, \dots, X_m)$, and hence surjectivity follows.
\end{proof}

Now we denote by $\mathcal{B}_{n, m}$ the set of all $n \times m$ Boolean matrices, that is, $n \times m$ matrices with entries $0$ or $1$.
For $A \in \mathcal{B}_{n, m}$, $i \in \{1, 2, \dots, n\}$, and $j \in \{1, 2, \dots, m\}$, denote by $A_{i, j}$ the $(i, j)$ entry of $A$.
We define the map $\Psi : \mathcal{S}_{\Gamma, \Theta} \to \mathcal{B}_{n, m}$ by
\[
\Psi(X_1, X_2, \dots, X_m)_{i, j} = 
\begin{cases}
1, & x_i \in X_j, \\
0, & x_i \notin X_j,
\end{cases}
\]
for all $i \in \{1, 2, \dots, n\}$ and $j \in \{1, 2, \dots, m\}$.
The map $\Psi$ has a particularly simple interpretation; it encodes, in matrix form, which elements are contained in each set of the sequence.

\begin{proposition}\label{psi:bijective}
   The map $\Psi : \mathcal{S}_{\Gamma, \Theta} \to \mathcal{B}_{n, m}$ is a bijection.
\end{proposition}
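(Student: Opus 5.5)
The plan is to write down an explicit inverse of $\Psi$ and check both compositions. Define $\Psi' : \mathcal{B}_{n, m} \to \mathcal{S}_{\Gamma, \Theta}$ by reading off the columns:
\[
\Psi'(A) = (X_1, X_2, \dots, X_m), \qquad X_j = \{x_i \in V_{\Theta} \mid A_{i, j} = 1\}
\]
for $j \in \{1, 2, \dots, m\}$. Each $X_j$ is a subset of $V_{\Theta}$, so $\Psi'(A)$ is indeed a sequence of length $m$ of subsets of $V_{\Theta}$, that is, an element of $\mathcal{S}_{\Gamma, \Theta}$.

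The first step is to show $\Psi' \circ \Psi = \mathrm{id}$. Let $(X_1, \dots, X_m) \in \mathcal{S}_{\Gamma, \Theta}$ and put $A = \Psi(X_1, \dots, X_m)$. By the definition of $\Psi$, we have $A_{i, j} = 1$ if and only if $x_i \in X_j$. Hence the $j$-th set of $\Psi'(A)$ is $\{x_i \mid x_i \in X_j\} = X_j$, using that $V_{\Theta} = \{x_1, \dots, x_n\}$.

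The second step is to show $\Psi \circ \Psi' = \mathrm{id}$. Let $A \in \mathcal{B}_{n, m}$ and put $(X_1, \dots, X_m) = \Psi'(A)$. Then $\Psi(\Psi'(A))_{i, j} = 1$ if and only if $x_i \in X_j$, which holds if and only if $A_{i, j} = 1$. Since all entries lie in $\{0, 1\}$, the two matrices agree entrywise.

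These two identities show that $\Psi$ is a bijection with inverse $\Psi'$. There is no real obstacle here. The only point to make explicit is that the vertices $x_1, \dots, x_n$ are distinct, so the row index $i$ determines the vertex $x_i$ uniquely and conversely; this is what makes the column-to-subset correspondence well defined in both directions.
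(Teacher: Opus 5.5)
Your proof is correct and takes essentially the same approach as the paper, which simply says the claim is immediate from the definition of $\Psi$. You make that explicit by writing down the column-reading inverse and checking both compositions, and you correctly note that the distinctness of the vertices $x_1, \dots, x_n$ is what makes the correspondence well defined.
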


\begin{proof}
   The proof is immediate from the definition of $\Psi$.
\end{proof}

\begin{theorem}\label{hom:main}
   The map $\Psi \circ \Phi : \mathrm{Hom}(\hkg, \hk) \to \Psi(\sgt)$ is a bijection.
\end{theorem}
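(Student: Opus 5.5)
The plan is to obtain the statement as a formal consequence of Proposition~\ref{phi:bijective} and Proposition~\ref{psi:bijective}. These give, respectively, that $\Phi : \mathrm{Hom}(\hkg, \hk) \to \sgt$ is a bijection and that $\Psi : \mathcal{S}_{\Gamma, \Theta} \to \mathcal{B}_{n, m}$ is a bijection. Since $\sgt \subseteq \mathcal{S}_{\Gamma, \Theta}$, I would consider the restriction $\Psi|_{\sgt} : \sgt \to \Psi(\sgt)$. This restriction is injective because $\Psi$ is. It is surjective onto $\Psi(\sgt)$ by the definition of the image. Hence it is a bijection.

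Next I would observe that $\Psi \circ \Phi$ is literally the composition $\Psi|_{\sgt} \circ \Phi$. This relies on $\Phi$ taking values in $\sgt$, which is the first part of Proposition~\ref{phi:bijective}. A composition of two bijections is a bijection, so $\Psi \circ \Phi : \mathrm{Hom}(\hkg, \hk) \to \Psi(\sgt)$ is bijective.

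For concreteness, I would also spell out the inverse. Given $A \in \Psi(\sgt)$, take $X_j = \{x_i \mid A_{i,j} = 1\}$. The resulting sequence $(X_1, \dots, X_m)$ is $(\Gamma,\Theta)$-admissible, and Theorem~\ref{theorem:p-pure} lets us send it to the homomorphism determined by $y_j \mapsto e_{X_j}$.

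There is no genuine obstacle here. The only point needing care is bookkeeping: one must check that the codomain of $\Phi$ really is $\sgt$, not merely $\mathcal{S}_{\Gamma,\Theta}$, so that restricting $\Psi$ is legitimate. That was already established in Proposition~\ref{phi:bijective}.
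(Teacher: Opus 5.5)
Your proposal is correct and follows the paper's own proof: restrict the bijection $\Psi$ to $\sgt$ to get a bijection onto $\Psi(\sgt)$, then compose it with the bijection $\Phi$ from Proposition~\ref{phi:bijective}. Your explicit description of the inverse, $X_j = \{x_i \mid A_{i,j} = 1\}$, is a harmless and accurate addition.
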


\begin{proof}
   By Proposition~\ref{phi:bijective}, $\Phi : \mathrm{Hom}(\hkg, \hk) \to \sgt$ is a bijection.
   By Proposition~\ref{psi:bijective}, $\Psi$ is bijective, and since $\sgt \subseteq \mathcal{S}_{\Gamma, \Theta}$, it follows that $\Psi|_{\sgt} : \sgt \to \Psi(\sgt)$ is a bijection.
   Therefore, $\Psi \circ \Phi$ is a bijection from $\mathrm{Hom}(\hkg, \hk)$ to $\Psi(\sgt)$.
\end{proof}

\section{Endomorphisms of $\hk$}\label{ENDO}

Here, we study the endomorphism monoid $\mathrm{End}(\hk)$.
We use results from Section~\ref{HOM} in the special case when $\Gamma = \Theta$.
In this case, we have $m = n$ and $y_i = x_i$ for $i = 1, 2, \dots, n$.

The following definitions and statements are very similar to the results in~\cite{andrensek}.
We prove these results again because the results in~\cite{andrensek} were derived for the Kiselman monoid, which is a special case of $\hk$.
For completeness, we include the proofs here for $\hk$.

For $\mathbf{X} = (X_1, X_2, \dots, X_n) \in \mathcal{S}_{\Theta, \Theta}$, denote by $\mathbf{X}_i = X_i$ for $i \in \{1, 2, \dots, n\}$.
We define the binary operation $\ast$ on $\mathcal{S}_{\Theta, \Theta}$ as follows.
Let $(X_1, X_2, \dots, X_n)$ and $(Y_1, Y_2, \dots, Y_n)$ be elements of $\mathcal{S}_{\Theta, \Theta}$.
Then we set
\[
(X_1, X_2, \dots, X_n) \ast (Y_1, Y_2, \dots, Y_n) = (\bigcup_{x_j \in Y_1} X_j, \bigcup_{x_j \in Y_2} X_j, \dots, \bigcup_{x_j \in Y_n} X_j).
\]
It quickly follows that $(\mathcal{S}_{\Theta, \Theta}, \ast)$ is a monoid with unit element $(\{x_1\}, \{x_2\}, \dots, \{x_n\})$.

\begin{proposition}\label{phi:homo}
   The map $\Phi : \mathrm{End}(\hk) \to \mathcal{S}_{\Theta, \Theta}$ is a monoid homomorphism from $\mathrm{End}(\hk)$ to $(\mathcal{S}_{\Theta, \Theta}, \ast)$.
\end{proposition}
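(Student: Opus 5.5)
We have to show two things: $\Phi$ sends the identity endomorphism to the unit of $(\mathcal{S}_{\Theta,\Theta},\ast)$, and $\Phi(\psi\circ\vp)=\Phi(\psi)\ast\Phi(\vp)$ for all $\psi,\vp\in\mathrm{End}(\hk)$. Here $\Phi$ is the map from Section~\ref{HOM} with $\Gamma=\Theta$, so $\Phi(\vp)=\big(c(\vp(x_1)),\dots,c(\vp(x_n))\big)$. The only tool needed is Lemma~\ref{content:lemma}, which expresses the content of a composite on a generator in terms of contents of the factors.

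\emph{Unit.} For $\vp=\mathrm{id}_{\hk}$ we have $c(\vp(x_i))=c(x_i)=\{x_i\}$ for every $i$. Hence $\Phi(\mathrm{id})=(\{x_1\},\dots,\{x_n\})$, which is the unit element of $(\mathcal{S}_{\Theta,\Theta},\ast)$.

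\emph{Multiplicativity.} Fix $\psi,\vp\in\mathrm{End}(\hk)$ and write $\mathbf{X}=\Phi(\psi)$ and $\mathbf{Y}=\Phi(\vp)$. Then $X_j=c(\psi(x_j))$ and $Y_i=c(\vp(x_i))$. By definition of $\ast$, the $i$-th component of $\mathbf{X}\ast\mathbf{Y}$ is
\[
\bigcup_{x_j\in Y_i}X_j=\bigcup_{x_j\in c(\vp(x_i))}c(\psi(x_j)).
\]
By Lemma~\ref{content:lemma}, this equals $c((\psi\circ\vp)(x_i))$, which is the $i$-th component of $\Phi(\psi\circ\vp)$. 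Since this holds for each $i\in\{1,\dots,n\}$, we get $\Phi(\psi\circ\vp)=\Phi(\psi)\ast\Phi(\vp)$.

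The only step that needs care is matching the order of the factors: composition $\psi\circ\vp$ corresponds to $\Phi(\psi)\ast\Phi(\vp)$ with $\Phi(\psi)$ on the left, because $\ast$ unions the sets of the left sequence indexed by the right sequence. Once this is written out, both required identities are immediate from Lemma~\ref{content:lemma}.
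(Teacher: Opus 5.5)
Your proposal is correct and follows the paper's proof: you check $\Phi(\mathrm{id}_{\hk})=(\{x_1\},\dots,\{x_n\})$, then get $\Phi(\psi\circ\vp)=\Phi(\psi)\ast\Phi(\vp)$ componentwise by combining Lemma~\ref{content:lemma} with the definition of $\ast$. Your remark about the order of the factors in $\ast$ matches the paper's computation as well.
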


\begin{proof} 
   Let $\psi, \vp \in \mathrm{End}(\hk)$ and let $i \in \{1, 2, \dots, n\}$.
   By Lemma~\ref{content:lemma}, we have 
   \[
   (\Phi(\psi \circ \vp))_i = c((\psi \circ \vp)(x_i)) = \bigcup_{x_j \in c(\vp(x_i))} c(\psi(x_j)).
   \]
   We also have
   \begin{align*}
      \big(\Phi(\psi) \ast \Phi(\vp)\big)_i &= \big( (c(\psi(x_1)), \dots, c(\psi(x_n))) \ast (c(\vp(x_1)), \dots, c(\vp(x_n))) \big)_i \\
      &= \bigcup_{x_j \in c(\vp(x_i))} c(\psi(x_j)).
   \end{align*}
   We thus have
   \[
   \big(\Phi(\psi \circ \vp)\big)_i = \big(\Phi(\psi) \ast \Phi(\vp)\big)_i.
   \]
   It follows that
   \begin{align*}
      \Phi(\psi \circ \vp) = \Phi(\psi) \ast \Phi(\vp).
   \end{align*}
   Moreover, since the identity map is the unit element of $\mathrm{End}(\hk)$ and
   \begin{align*}
      \Phi(\mathrm{id}_{\hk}) = (\{x_1\}, \{x_2\}, \dots, \{x_n\}),
   \end{align*}
   we conclude that $\Phi$ is a monoid homomorphism. 
\end{proof}

We define the binary operation $\cdot$ on $\mathcal{B}_{n, n}$ as follows. 
Let $A, B \in \mathcal{B}_{n, n}$.
Then $C = A \cdot B$, with
\begin{align*}
   C_{i, j} = \bigvee_{k=1}^n (A_{i, k} \land B_{k, j}),
\end{align*}
for all $i, j \in \{1, 2, \dots, n\}$.
Here, $\lor$ denotes the Boolean join and $\land$ denotes the Boolean meet on $\{0, 1\}$.
One can easily check that $(\mathcal{B}_{n, n}, \cdot)$ is a monoid, and its unit element is the identity matrix $I$ of size $n \times n$.

\begin{proposition}\label{psi:homo}
   The map $\Psi : \mathcal{S}_{\Theta, \Theta} \to \mathcal{B}_{n, n}$ is a monoid homomorphism from $(\mathcal{S}_{\Theta, \Theta}, \ast)$ to $(\mathcal{B}_{n, n}, \cdot)$.
\end{proposition}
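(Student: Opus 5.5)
The plan is to verify the two defining properties of a monoid homomorphism directly from the definitions of $\ast$, $\cdot$, and $\Psi$. The argument is entrywise: I will compare the $(i,j)$ entry of $\Psi(\mathbf{X} \ast \mathbf{Y})$ with the $(i,j)$ entry of $\Psi(\mathbf{X}) \cdot \Psi(\mathbf{Y})$ for arbitrary $\mathbf{X} = (X_1, \dots, X_n)$ and $\mathbf{Y} = (Y_1, \dots, Y_n)$ in $\mathcal{S}_{\Theta,\Theta}$. Since both matrices are Boolean, it suffices to show that one entry equals $1$ exactly when the other does.

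For the product, fix $i, j \in \{1, \dots, n\}$. By the definition of $\ast$, we have $(\mathbf{X} \ast \mathbf{Y})_j = \bigcup_{x_k \in Y_j} X_k$. Hence
\[
\Psi(\mathbf{X} \ast \mathbf{Y})_{i,j} = 1
\]
holds if and only if $x_i \in X_k$ for some $k$ with $x_k \in Y_j$. On the other side, by the definitions of $\cdot$ and $\Psi$,
\[
\bigl(\Psi(\mathbf{X}) \cdot \Psi(\mathbf{Y})\bigr)_{i,j} = \bigvee_{k=1}^n \bigl(\Psi(\mathbf{X})_{i,k} \land \Psi(\mathbf{Y})_{k,j}\bigr).
\]
This equals $1$ if and only if there is a $k$ with $x_i \in X_k$ and $x_k \in Y_j$, which is the same condition. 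So the two matrices agree entrywise, and
\[
\Psi(\mathbf{X} \ast \mathbf{Y}) = \Psi(\mathbf{X}) \cdot \Psi(\mathbf{Y}).
\]

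For the unit, $\Psi(\{x_1\}, \dots, \{x_n\})_{i,j} = 1$ if and only if $x_i \in \{x_j\}$, that is, if and only if $i = j$. Hence the unit of $(\mathcal{S}_{\Theta,\Theta}, \ast)$ is mapped to the identity matrix $I$, which is the unit of $(\mathcal{B}_{n,n}, \cdot)$.

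There is no real obstacle in this proof. The only point requiring care is the order of indices. The union in $\ast$ is taken over $x_k \in Y_j$, with the factor $X_k$ coming from the left operand, while in the matrix product the left factor supplies the row index $i$ and the right factor supplies the column index $j$. With $\Psi$ placing $x_i$ in row $i$ and $X_j$ in column $j$, these conventions match, so $\Psi$ is multiplicative in the stated order and not an anti-homomorphism.
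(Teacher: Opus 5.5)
Your proof is correct and takes essentially the same route as the paper. Both compare entries, showing that the $(i,j)$ entries of $\Psi(\mathbf{X} \ast \mathbf{Y})$ and $\Psi(\mathbf{X}) \cdot \Psi(\mathbf{Y})$ equal $1$ exactly when some $k$ has $x_i \in X_k$ and $x_k \in Y_j$, and both then note that $(\{x_1\}, \dots, \{x_n\})$ maps to $I$; you additionally verify that last fact explicitly and check the index order, where the paper simply asserts it.
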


\begin{proof}
   Let $\mathbf{X}, \mathbf{Y} \in \mathcal{S}_{\Theta, \Theta}$, where $\mathbf{X} = (X_1, X_2, \dots, X_n)$ and $\mathbf{Y} = (Y_1, Y_2, \dots, Y_n)$.
   We compute
   \[
   \Psi(\mathbf{X} \ast \mathbf{Y}) = \Psi(\bigcup_{x_j \in Y_1} X_j, \bigcup_{x_j \in Y_2} X_j, \dots, \bigcup_{x_j \in Y_n} X_j).
   \]
   It follows that for $i, j \in \{1, 2, \dots, n\}$ we have 
   \begin{align*}
      \Psi(\mathbf{X} \ast \mathbf{Y})_{i, j} = 1 &\iff x_i \in \bigcup_{x_l \in Y_j} X_l \\
      &\iff \text{there exists } x_k \in Y_j \text{ such that } x_i \in X_k.
   \end{align*}
   On the other hand,
   \[
   (\Psi(\mathbf{X}) \cdot \Psi(\mathbf{Y}))_{i, j} = \bigvee_{k=1}^n (\Psi(\mathbf{X})_{i, k} \land \Psi(\mathbf{Y})_{k, j}).
   \]
   Hence, we have
   \begin{align*}
      (\Psi(\mathbf{X}) \cdot \Psi(\mathbf{Y}))_{i, j} = 1 &\iff \text{there exists } k \in \{1, 2, \dots, n\} \text{ such that } \Psi(\mathbf{X})_{i, k} = \Psi(\mathbf{Y})_{k, j} = 1 \\
      &\iff \text{there exists } k \in \{1, 2, \dots, n\} \text{ such that } x_i \in X_k \text{ and } x_k \in Y_j \\
      &\iff \text{there exists } x_k \in Y_j \text{ such that } x_i \in X_k.
   \end{align*}
   We thus obtain
   \[
   \Psi(\mathbf{X} \ast \mathbf{Y})_{i, j} = 1 \iff (\Psi(\mathbf{X}) \cdot \Psi(\mathbf{Y}))_{i, j} = 1,
   \]
   and since $\Psi(\mathbf{X} \ast \mathbf{Y})$ and $\Psi(\mathbf{X}) \cdot \Psi(\mathbf{Y})$ are both Boolean matrices, we get
   \[
   \Psi(\mathbf{X} \ast \mathbf{Y})_{i, j} = (\Psi(\mathbf{X}) \cdot \Psi(\mathbf{Y}))_{i, j}.
   \]
   Since $i$ and $j$ are arbitrary, it follows that
   \[
   \Psi(\mathbf{X} \ast \mathbf{Y}) = \Psi(\mathbf{X}) \cdot \Psi(\mathbf{Y}).
   \]
   Furthermore, since $\Psi(\{x_1\}, \{x_2\}, \dots, \{x_n\}) = I$, it follows that $\Psi$ is a monoid homomorphism.
\end{proof}

\begin{theorem}\label{phi:psi:homo}
   The map $\Psi \circ \Phi : \mathrm{End}(\hk) \to \mathcal{B}_{n, n}$ is a monoid homomorphism from $\mathrm{End}(\hk)$ to $(\mathcal{B}_{n, n}, \cdot)$.
\end{theorem}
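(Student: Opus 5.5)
The statement follows by composing the two homomorphisms already established. By Proposition~\ref{phi:homo}, $\Phi : \mathrm{End}(\hk) \to (\mathcal{S}_{\Theta,\Theta}, \ast)$ is a monoid homomorphism. By Proposition~\ref{psi:homo}, $\Psi : (\mathcal{S}_{\Theta,\Theta}, \ast) \to (\mathcal{B}_{n,n}, \cdot)$ is a monoid homomorphism. The composite of two monoid homomorphisms is again a monoid homomorphism, so I would verify the two defining properties directly.

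For $\psi, \vp \in \mathrm{End}(\hk)$, I first apply Proposition~\ref{phi:homo} and then Proposition~\ref{psi:homo}:
\[
(\Psi \circ \Phi)(\psi \circ \vp) = \Psi\big(\Phi(\psi) \ast \Phi(\vp)\big) = \Psi(\Phi(\psi)) \cdot \Psi(\Phi(\vp)).
\]
For the unit, $\Phi(\mathrm{id}_{\hk}) = (\{x_1\}, \dots, \{x_n\})$, and $\Psi$ sends this sequence to the identity matrix $I$. Hence $(\Psi \circ \Phi)(\mathrm{id}_{\hk}) = I$.

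The only bookkeeping point is domains and codomains. In Section~\ref{ENDO}, $\Phi$ is regarded as a map into all of $\mathcal{S}_{\Theta,\Theta}$ rather than only into $\stt$. That is legitimate, since $\stt \subseteq \mathcal{S}_{\Theta,\Theta}$ and Proposition~\ref{phi:homo} is stated with codomain $\mathcal{S}_{\Theta,\Theta}$. The product structures used in the two propositions also match: $\ast$ on $\mathcal{S}_{\Theta,\Theta}$ and $\cdot$ on $\mathcal{B}_{n,n}$. There is no real obstacle here; the statement is immediate.
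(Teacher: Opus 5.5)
Your proof is correct and takes essentially the paper's approach: you compose the homomorphisms from Propositions~\ref{phi:homo} and~\ref{psi:homo} and use that a composite of monoid homomorphisms is a monoid homomorphism. Your explicit checks of multiplicativity, $\Psi(\Phi(\psi)) \cdot \Psi(\Phi(\vp))$ in the right order, and of $(\Psi \circ \Phi)(\mathrm{id}_{\hk}) = I$ just spell out that one-line argument.
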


\begin{proof}
   The proof is immediate from Propositions~\ref{phi:homo} and~\ref{psi:homo}, and the fact that a composition of monoid homomorphisms is a monoid homomorphism.
\end{proof}

\begin{theorem}\label{endo:main}
   The set $\Psi(\stt)$, equipped with the operation $\cdot$, is a monoid and we have
   \[
   \mathrm{End}(\hk) \cong (\Psi(\stt), \cdot),
   \]
   where the corresponding monoid isomorphism is given by $\Psi \circ \Phi$.
\end{theorem}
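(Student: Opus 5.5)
The plan is to combine the bijectivity result of Section~\ref{HOM} with the homomorphism property of Theorem~\ref{phi:psi:homo}. I will first specialise Theorem~\ref{hom:main} to $\Gamma = \Theta$, which gives that $\Psi \circ \Phi : \mathrm{End}(\hk) \to \Psi(\stt)$ is a bijection. Theorem~\ref{phi:psi:homo} gives that the same map, viewed as a map into $\mathcal{B}_{n,n}$, is a monoid homomorphism into $(\mathcal{B}_{n,n}, \cdot)$. The rest of the argument transfers the monoid structure along this map.

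The key step is to show that $\Psi(\stt)$ is a submonoid of $(\mathcal{B}_{n,n}, \cdot)$. I will not verify admissibility of products directly through the predicate $p_{\Theta}$, which would be the hard route. Instead I will use the image of a homomorphism. Since $\Psi \circ \Phi$ is a monoid homomorphism on $\mathrm{End}(\hk)$, its image is a submonoid of $\mathcal{B}_{n,n}$, and by Theorem~\ref{hom:main} that image is exactly $\Psi(\stt)$.

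Concretely, take $A, B \in \Psi(\stt)$ and write $A = (\Psi\circ\Phi)(\psi)$ and $B = (\Psi\circ\Phi)(\vp)$ using surjectivity. Then
\[
A \cdot B = (\Psi\circ\Phi)(\psi \circ \vp) \in \Psi(\stt).
\]
The identity matrix $I = (\Psi\circ\Phi)(\mathrm{id}_{\hk})$ also lies in $\Psi(\stt)$. Associativity is inherited from $(\mathcal{B}_{n,n}, \cdot)$, so $(\Psi(\stt), \cdot)$ is a monoid.

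It then follows that $\Psi \circ \Phi$ is a bijective monoid homomorphism $\mathrm{End}(\hk) \to (\Psi(\stt), \cdot)$. A bijective homomorphism of monoids is an isomorphism, since its inverse automatically preserves products and the unit.

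The only point that needs care is the closure step: it relies on the codomain restriction in Theorem~\ref{hom:main} giving exactly $\Psi(\stt)$ as the image. Once that is in hand, no direct combinatorial check that $\ast$ preserves admissibility is required.
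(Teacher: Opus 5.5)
Your proposal is correct and follows the paper's proof essentially step for step. You take the bijection from Theorem~\ref{hom:main} with $\Gamma = \Theta$, use surjectivity together with Theorem~\ref{phi:psi:homo} to get closure of $\Psi(\stt)$ under $\cdot$, observe that $I = (\Psi\circ\Phi)(\mathrm{id}_{\hk})$ lies in it, and conclude that you have a bijective homomorphism, hence an isomorphism.
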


\begin{proof}
   By Theorem~\ref{hom:main}, $\Psi \circ \Phi : \mathrm{End}(\hk) \to \Psi(\stt)$ is a bijection.
   Pick any $A, B \in \Psi(\stt)$.
   Then there exist $\psi, \vp \in \mathrm{End}(\hk)$ such that 
   \[
   (\Psi \circ \Phi)(\psi) = A \quad \text{and} \quad (\Psi \circ \Phi)(\vp) = B.
   \]
   Using Theorem~\ref{phi:psi:homo}, we conclude that 
   \[
   A \cdot B = (\Psi \circ \Phi)(\psi) \cdot (\Psi \circ \Phi)(\vp) = (\Psi \circ \Phi)(\psi \circ \vp) \in \Psi(\stt),
   \]
   since $\psi \circ \vp \in \mathrm{End}(\hk)$.
   This shows that $\Psi(\stt)$ is closed under $\cdot$.
   Since
   \[
   I = (\Psi \circ \Phi)(\mathrm{id}_{\hk}) \in \Psi(\stt),
   \]
   it follows that $(\Psi(\stt), \cdot)$ is a submonoid of $(\mathcal{B}_{n, n}, \cdot)$, and
   \[
   \Psi \circ \Phi : \mathrm{End}(\hk) \to \Psi(\stt)
   \]
   is an isomorphism of monoids from $\mathrm{End}(\hk)$ to $(\Psi(\stt), \cdot)$ by Theorems~\ref{hom:main} and~\ref{phi:psi:homo}.
\end{proof}

\bibliography{sn-bibliography}

\end{document}